   \numberwithin{equation}{section}
\journal{``Annals of Forest Research"} % ÐÞ¸ÄÔÓÖ¾Ãû
\newtheorem{thm}{Theorem}[section]
\newtheorem{cor}[thm]{Corollary}
\newtheorem{lem}[thm]{Lemma}
\newtheorem{prop}[thm]{Proposition}
\newtheorem{defn}[thm]{Definition}
\begin{document}
\begin{frontmatter}
\author{Tong Wu$^{a}$}
\ead{wut977@nenu.edu.cn}
\author{Yong Wang$^{b,*}$}
\ead{wangy581@nenu.edu.cn}
\author{Xue Wang$^{b}$}
\ead{1203907183@qq.com}
\cortext[cor]{Corresponding author.}
\address{$^a$Department of Mathematics, Northeastern University, Shenyang, 110819, China}
\address{$^b$School of Mathematics and Statistics, Northeast Normal University,
Changchun, 130024, China}

\title{ Super twisted products}
\begin{abstract}
In the paper,we define the $W_2$-curvature tensor on super Riemannian manifolds. And we compute the curvature tensor, the Ricci tensor and the $W_2$-curvature tensor on super twisted product spaces. Furthermore, we investigate the $W_2$-curvature flat super twisted product manifolds. Finally, we get a result that a mixed Ricci-flat super twisted product semi-Riemannian manifold can
be expressed as a super warped product semi-Riemannian manifold.
\end{abstract}
\begin{keyword} The $W_2$-curvature tensor; the curvature tensor; Ricci tensor; super twisted product space; mixed Ricci-flat.\\

\end{keyword}
\end{frontmatter}
\textit{2010 Mathematics Subject Classification:}
53C40; 53C42.
\section{Introduction}
 \indent The concept of warped products was first introduced by
    Bishop and ONeil (see \cite{BO}) to construct examples of Riemannian
    manifolds with negative curvature. Singly warped products have a natural generalization. The (singly) twisted product $B\times_hF$ of two pseudo-Riemannian manifolds $(B,g_B)$ and
    $(F,g_F)$ with a smooth function $h:B\times F\rightarrow (0,\infty)$ is the product
    manifold $B\times F$ with the metric tensor $g=g_B\oplus
    h^2g_F.$ Here, $(B,g_B)$ is called the base manifold,
    $(F,g_F)$ is called as the fiber manifold and $h$ is called as
    the warping function. In Riemannian geometry,
    warped product manifolds and their generic forms have been used
    to construct new examples with interesting curvature properties
    since then. In \cite{DD}, F. Dobarro and E. Dozo had studied from the viewpoint of partial differential equations and variational methods,
    the problem of showing when a Riemannian metric of constant scalar curvature can be produced on a product manifolds by a warped product
    construction.
    In \cite{EJK}, Ehrlich, Jung and Kim got explicit solutions to warping function to have a constant scalar curvature for generalized
    Robertson-Walker space-times.
    In \cite{ARS}, explicit solutions were also obtained for the warping
    function to make the space-time as Einstein when the fiber is
    also Einstein. It is shown that a mixed Ricci-flat twisted product semi-Riemannian manifold can
be expressed as a warped product semi-Riemannian manifold in \cite{MF}.\\
      \indent  Pokhariyal and Mishra first defined the $W_2$-curvature tensor and they studied
its physical and geometrical properties in \cite{AC1}. In \cite{SO1} and \cite{SO}, Sular and \"{O}zgur studied warped product manifolds with a
         semi-symmetric metric connection and a
         semi-symmetric non-metric connection, they computed curvature of
         semi-symmetric metric connection and semi-symmetric non-metric connection
          and considered Einstein warped product manifolds with a
         semi-symmetric metric connection and a semi-symmetric non-metric connection. In \cite{W1}, Wang studied the Einstein multiply warped products with a semi-symmetric metric connection and the multiply warped products with a semi-symmetric metric connection with constant scalar curvature.\\
  \indent On the other hand, in \cite{BG}, the definition of super warped product spaces was given. Einstein warped products were studied in \cite{Ge}. In \cite{GDMVR}, several new super warped product spaces were given and the authors also studied the Einstein
  equations with cosmological constant in these new super warped product spaces. In \cite{WY}, Wang studied super warped product spaces with a semi-symmetric metric connection. In \cite{ss}, Shenawy. S and $\ddot{U}$nal. B studyed the $W_2$-curvature tensor on (singly) warped product
manifolds as well as on generalized Robertson-Walker and standard static space-time and investigated $W_2$-curvature flat warped product manifolds. In this paper, we define the $W_2$-curvature tensor on super twisted products. Our motivation is to study super twisted products and explore the Ricci tensor and the $W_2$-curvature tensor on
super twisted product manifolds.\\
\indent This paper is organized as follows. In Section \ref{Section:2}, we state some definitions of super manifolds and super Riemannian metrics. We also define the $W_2$-curvature tensor on super Riemannian manifolds. In Section \ref{Section:3}, we compute the curvature tensor, the Ricci tensor on super twisted product spaces. Further, we give the $W_2$-curvature tensor of the Levi-civita
 connection on super twisted product spaces. In Section \ref{Section:4}, we investigate $W_2$-curvature flat super twisted product manifolds. Finally, we get a result that a mixed Ricci-flat super twisted product semi-Riemannian manifold can
be expressed as a super warped product semi-Riemannian manifold.
%%%%%%%%%%µÚ¶þ²¿·Ö%%%
\section{Preliminaries}
\label{Section:2}
In this section, we give some definitions about Riemannian supergeometry.
\begin{defn}\label{def1}(Definition 1 in \cite{BG}) A locally $\mathbb{Z}_2$-ringed space is a pair $S:= (|S|, \mathcal{O}_S)$ where $|S|$ is a second-countable
Hausdorff space, and a $\mathcal{O}_S$ is a sheaf of $\mathbb{Z}_2$-graded $\mathbb{Z}_2$-commutative associative unital $\mathbb{R}$-algebras, such that the
stalks $\mathcal{O}_{S,p}$, $p\in |S|$ are local rings.
\end{defn}
 \indent In this context, $\mathbb{Z}_2$-commutative means that any two sections $s,t\in \mathcal{O}_S(|U|),~~|U|\subset|S|$ open,
 of homogeneous
degree $|s|\in \mathbb{Z}_2$
and $|t|\in \mathbb{Z}_2$
commute up to the sign rule
$st=(-1)^{|s||t|}ts$.
 $\mathbb{Z}_2$-ring
space $U^{m|n}:= (U,C^{\infty}_{U^m}\otimes \wedge \mathbb{R}^n)$, is called standard
superdomain where $C^{\infty}_{U^m}$ is the sheaf of smooth functions on $U$ and $\wedge\mathbb{R}^n$ is
the exterior algebra of $\mathbb{R}^n$. We can employ (natural) coordinates $x^I:=(x^a,\xi^A)$ on any $\mathbb{Z}_2$-domain, where $x^a$ form a coordinate system on $U$ and the $\xi^A$
are formal coordinates.
\begin{defn}\label{def2}(Notation and preliminary concepts in \cite{BaJ})
 A supermanifold of dimension $m|n$ is a super ringed space
$M=(|M|, \mathcal{O}_M )$ that is locally isomorphic to $\mathbb{R}^{m|n}$ and $|M|$ is a second countable
and Hausdorff topological space.
\end{defn}
The tangent sheaf $\mathcal{T}M$ of a $\mathbb{Z}_2$-manifold $M$ is defined as the sheaf of derivations of sections of the structure
sheaf, i.e., $\mathcal{T}M(|U|) := {\rm Der}(\mathcal{O}_M(|U|)),$ for arbitrary open set $|U|\subset |M|.$ Naturally, this is a sheaf of locally free $\mathcal{O}_M$-modules. Global sections of the tangent sheaf are referred to as {\it vector fields}. We denote the $\mathcal{O}_M (|M|)$-module
of vector fields as ${\rm Vect}(M)$. The dual of the tangent sheaf is the {\it cotangent sheaf}, which we denote as $\mathcal{T}^*M$.
This is also a sheaf of locally free $\mathcal{O}_M$-modules. Global section of the cotangent sheaf we will refer to as {\it one-forms}
and we denote the $\mathcal{O}_M(|M|)$-module of one-forms as $\Omega^1(M)$.
\begin{defn}\label{def3}(Definition 4 in \cite{BG})
 A Riemannian metric on a $\mathbb{Z}_2$-manifold M is a $\mathbb{Z}_2$-homogeneous, $\mathbb{Z}_2$-symmetric, non-degenerate,
$\mathcal{O}_M$-linear morphisms of sheaves $\left<-,-\right>_g:~~\mathcal{T}M\otimes \mathcal{T}M\rightarrow \mathcal{O}_M.$
A $\mathbb{Z}_2$-manifold equipped with a Riemannian metric is referred to as a Riemannian $\mathbb{Z}_2$-manifold.
\end{defn}
We will insist that the Riemannian metric is homogeneous with respect to the $\mathbb{Z}_2$-degree, and we will denote
the degree of the metric as $|g| \in \mathbb{Z}_2$.
Explicitly, a Riemannian metric has the following properties:\\
(1)$ |\left<X,Y\right>_g |= |X| + |Y |+ |g|,$\\
(2)$\left<X,Y\right>_g =(-1)^{|X||Y|}\left<Y,X\right>_g,$\\
(3) If $\left<X,Y\right>_g = 0$ for all $Y \in Vect(M),$ then $X = 0,$\\
(4) $\left<fX+Y,Z\right>_g =f\left<X,Z\right>_g +\left<Y,Z\right>_g ,$\\
for arbitrary (homogeneous) $ X, Y, Z \in {\rm Vect}(M)$ and $f \in C^{\infty}(M)$. We will say that a Riemannian metric is
even if and only if it has degree zero. Similarly, we will say that a Riemannian metric is odd if and only
if it has degree one. Any Riemannian metric we consider will be either even or odd as we will only be
considering homogeneous metrics.\\
\begin{defn}\label{fff4}(Definition 9 in \cite{BG}) An affine connection on a $\mathbb{Z}_2$-manifold is a $\mathbb{Z}_2$-degree preserving map\\
$$\nabla:{\rm Vect}(M)\times {\rm Vect}(M)\rightarrow {\rm Vect}(M);~~(X,Y)\mapsto \nabla_XY,$$
which satisfies the followings\\
1) Bi-linearity $$\nabla_X(Y+Z)=\nabla_XY+\nabla_XZ;~~\nabla_{X+Y}Z=\nabla_XZ+\nabla_YZ,$$
2)$C^{\infty}(M)$-linearrity in the first argument
$$\nabla_{fX}Y=f\nabla_XY,$$
3)The Leibniz rule
$$\nabla_X(fY)=X(f)Y+(-1)^{|X||f|}f\nabla_XY,$$
for all homogeneous $X,Y,Z\in {\rm Vect}(M)$ and $f\in C^{\infty}(M)$.
\end{defn}
\begin{defn}\label{def4}(Definition 10 in \cite{BG})
 The torsion tensor of an affine connection \\
 $T_\nabla:~~{\rm Vect}(M)\otimes_{C^{\infty}(M)} {\rm Vect}(M)\rightarrow {\rm Vect}(M)$
 is
defined as
$$T_\nabla(X,Y):=\nabla_XY-(-1)^{|X||Y|}\nabla_YX-[X,Y],$$
for any (homogeneous) $X, Y \in {\rm Vect}(M)$. An affine connection is said to be symmetric if the torsion vanishes.
\end{defn}
\begin{defn}\label{def5}(Definition 11 in \cite{BG})
An affine connection on a Riemannian $\mathbb{Z}_2$-manifold $(M, g)$ is said to be metric compatible if
and only if
$$ X\left<Y,Z\right>_g=\left<\nabla_XY,Z\right>_g+(-1)^{|X||Y|}\left<Y,\nabla_XZ\right>_g,$$
for any $X, Y, Z\in  {\rm Vect}(M)$.
\end{defn}
\begin{thm}\label{thm1}(Theorem 1 in \cite{BG}) There is a unique symmetric (torsionless) and metric compatible
affine connection $\nabla^L$ on a Riemannian $\mathbb{Z}_2$-manifold $(M, g)$ which satisfies the Koszul formula
\begin{align}\label{a1}
2\left<\nabla^L_XY,Z\right>_g&=X\left<Y,Z\right>_g+\left<[X,Y],Z\right>_g\nonumber\\
&+(-1)^{|X|(|Y|+|Z|)}(Y\left<Z,X\right>_g-\left<[Y,Z],X\right>_g)\nonumber\\
&-(-1)^{|Z|(|X|+|Y|)}(Z\left<X,Y\right>_g-\left<[Z,X],Y\right>_g),
\end{align}
for all homogeneous $X,Y,Z\in {\rm Vect}(M)$.
\end{thm}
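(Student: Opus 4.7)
The plan is to follow the classical Koszul argument, adapted with the graded sign rules in Definitions \ref{def3}--\ref{def5}. For uniqueness, I would assume a connection $\nabla^L$ with both properties exists and then derive the Koszul formula from them. Apply metric compatibility to the three quantities $X\left<Y,Z\right>_g$, $Y\left<Z,X\right>_g$ and $Z\left<X,Y\right>_g$; each expansion produces two terms of the form $\left<\nabla_{\cdot}\cdot,\cdot\right>_g$ with a specific $\mathbb{Z}_2$-sign factor inherited from Definition \ref{def5}. Next I use the torsion-free condition in the form
\[
\nabla_X Y = (-1)^{|X||Y|}\nabla_Y X + [X,Y],
\]
to rewrite one of the two terms in each of the three expansions so that the unknown $\nabla^L_X Y$ appears twice and all other covariant derivatives cancel. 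The cyclic combination with signs $+1$, $+(-1)^{|X|(|Y|+|Z|)}$, $-(-1)^{|Z|(|X|+|Y|)}$ is exactly what is needed for these cancellations to occur, producing the right-hand side of (\ref{a1}). Since $Z$ is arbitrary, non-degeneracy of $\left<-,-\right>_g$ (property (3) of Definition \ref{def3}) forces $\nabla^L_X Y$ to be uniquely determined.

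For existence, I would turn the argument around and \emph{define} $\nabla^L_X Y$ through the Koszul formula (\ref{a1}). To make sense of this, I would first check that the right-hand side, viewed as a map $Z\mapsto \mathrm{RHS}(X,Y,Z)$, is $\mathcal{O}_M$-linear in $Z$ in the graded sense; the Leibniz-type terms from $Y\left<Z,X\right>_g$ and $Z\left<X,Y\right>_g$, together with the bracket terms $\left<[Y,Z],X\right>_g$ and $\left<[Z,X],Y\right>_g$, must combine so that all derivatives of a test function $f$ placed before $Z$ cancel, modulo the signs dictated by $\mathbb{Z}_2$-commutativity. Non-degeneracy then produces a well-defined homogeneous vector field $\nabla^L_X Y$ of degree $|X|+|Y|$.

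Finally, I would verify that the connection so defined satisfies the four axioms of Definition \ref{fff4} as well as symmetry and metric compatibility. Bilinearity over $\R$ and $C^\infty(M)$-linearity in $X$ follow by inspection of the Koszul formula together with graded bracket identities; the Leibniz rule in $Y$ requires carefully tracking the sign $(-1)^{|X||f|}$ coming from $X(f)$ versus $f\,X(\cdot)$. For symmetry, I would compute $\left<\nabla^L_X Y - (-1)^{|X||Y|}\nabla^L_Y X,Z\right>_g$ by applying (\ref{a1}) twice and showing that the result equals $\left<[X,Y],Z\right>_g$; most terms cancel in cyclic pairs, and the remaining bracket terms combine by the graded Jacobi identity. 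Metric compatibility is obtained by evaluating $\left<\nabla^L_X Y,Z\right>_g + (-1)^{|X||Y|}\left<Y,\nabla^L_X Z\right>_g$ via two applications of the Koszul formula, which reproduces $X\left<Y,Z\right>_g$.

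The main technical obstacle, and the only place the proof really differs from the classical Riemannian case, is bookkeeping of the $\mathbb{Z}_2$-signs: one must check at every step that the cyclic permutations of $(X,Y,Z)$ used in the standard Koszul manipulation carry the precise sign factors $(-1)^{|X|(|Y|+|Z|)}$ and $(-1)^{|Z|(|X|+|Y|)}$ appearing in (\ref{a1}), so that cancellations actually occur. Once the sign conventions are fixed consistently with Definitions \ref{def3}--\ref{def5}, the remainder is a routine graded adaptation of the classical proof.
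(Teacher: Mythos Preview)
The paper does not give its own proof of this statement: Theorem~\ref{thm1} is simply quoted as ``Theorem 1 in \cite{BG}'' and is used as background, with no argument supplied in the present paper. So there is nothing here to compare your proposal against.

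That said, your outline is the standard Koszul argument in the $\mathbb{Z}_2$-graded setting and is the approach taken in the cited reference \cite{BG}. The strategy---derive (\ref{a1}) from metric compatibility plus vanishing torsion to get uniqueness, then define $\nabla^L$ via (\ref{a1}) and verify the connection axioms, symmetry, and metric compatibility---is correct, and your identification of the sign bookkeeping as the only nontrivial point is accurate.
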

\begin{defn}\label{def6}(Definition 13 in \cite{BG})
The Riemannian curvature tensor of an affine connection
$$R_\nabla:~~{\rm Vect}(M)\otimes_{C^{\infty}(M)} {\rm Vect}(M)\otimes_{C^{\infty}(M)} {\rm Vect}(M)\rightarrow {\rm Vect}(M)$$
is defined as
\begin{align}\label{a2}
R_\nabla(X, Y )Z =\nabla_X\nabla_Y-(-1)^{|X||Y|}\nabla_Y\nabla_X-\nabla_{[X,Y]}Z,
\end{align}
for all $X, Y$ and $Z \in {\rm Vect}(M)$.
\end{defn}
Directly from the definition it is clear that
\begin{align}\label{a3}
R_\nabla(X, Y )Z =-(-1)^{|X||Y|}R_\nabla(Y,X)Z,
\end{align}
for all $X, Y$ and $Z \in {\rm Vect}(M)$.
\begin{defn}\label{def7}(Definition 14 in \cite{BG})
 The Ricci curvature tensor of an affine connection is the symmetric rank-$2$ covariant tensor
defined as
 \begin{equation}\label{a4}
Ric_\nabla(X, Y ):=(-1)^{|\partial_{x^I}|(|\partial_{x^I}|+|X|+|Y|)}\frac{1}{2}\left[R_\nabla(\partial_{x^I},X)Y+(-1)^{|X||Y|}R_\nabla(\partial_{x^I},Y)X\right]^I,
\end{equation}
where $X,Y\in {\rm Vect}(M)$ and $[~~]^I$ denotes the coefficient of $\partial_{x^I}$ and $\partial_{x^I}$ is the natural frame of $\mathcal{T}M$.
\end{defn}
\begin{defn}\label{def8}(Definition 16 in \cite{BG})
Let $f \in C^{\infty}(M)$ be an arbitrary function on a Riemannian $\mathbb{Z}_2$-manifold $(M, g)$. The gradient
of $f$ is the unique vector field ${\rm grad}_gf$ such that
 \begin{equation}\label{a5}
X(f)=(-1)^{|f||g|}\left<X,{\rm grad}_gf\right>_g,
\end{equation}
for all $X \in {\rm Vect}(M)$.
\end{defn}
\begin{defn}\label{def9}(Definition 17 in \cite{BG})
Let $(M, g)$ be a Riemannian $\mathbb{Z}_2$-manifold
and let $\nabla^L$ be the associated Levi-Civita connection.
The covariant divergence is the map ${\rm Div}_L:{\rm Vect}(M)\rightarrow C^{\infty}(M)$, given by
\begin{equation}\label{a6}
{\rm Div}_L(X)=(-1)^{|\partial_{x^I}|(|\partial_{x^I}|+|X|)}(\nabla_{\partial_{x^I}}X)^I,
\end{equation}
for any arbitrary $X \in {\rm Vect}(M)$.
\end{defn}
\begin{defn}\label{def10}(Definition 18 in \cite{BG}) Let $(M, g)$ be a Riemannian $\mathbb{Z}_2$-manifold
and let $\nabla^L$ be the associated Levi-Civita connection.
The connection Laplacian (acting on functions) is the differential operator of  $\mathbb{Z}_2$-degree $|g|$ defined as
\begin{equation}\label{a7}
\triangle_g(f)={\rm Div}_L({\rm grad}_gf),
\end{equation}
for any and all $f \in C^{\infty}(M).$
\end{defn}
\begin{defn}\label{def11}Let $(M, g)$ be a Riemannian $\mathbb{Z}_2$-manifold and $\nabla$ be the Levi-Civita connection associated to the Riemannian metric $g^M$. Let $D\subseteq TM$ be a super distribution and $D^\bot\subseteq
TM$ is the orthogonal distribution to $D$, then $g^M=g^D+g^{D^\bot}$. Let $\pi^D:TM\rightarrow D$, $\pi^{D^\bot}:TM\rightarrow D^\bot$ be the projections.
For $X,Y \in\Gamma(D)$, we define $\nabla^D_XY=\pi^D(\nabla_XY)$,
then we have the fundamental form of submanifold on Riemannian $\mathbb{Z}_2$-manifold $(M, g)$
\begin{equation}\label{a8}
\nabla_XY=\nabla^D_{X}Y+B(X,Y),
\end{equation}
\begin{equation}\label{ao8}
\nabla_X\xi=-(-1)^{|X||\xi|}A_\xi X+L^{\bot}_X\xi,\\
\end{equation}
 where $B(X,Y)=\pi^{D^\bot}\nabla_XY$, $L^{\bot}_X\xi=\pi^{D^\bot}\nabla_X\xi$ and $\pi^D\nabla_X\xi=-(-1)^{|X||\xi|}A_\xi X$ for any homogenous $\xi\in\Gamma(D^{\bot})$ and $X, Y\in\Gamma(D)$.
\end{defn}
Then
\begin{align}\label{a1ll}
&B(fX,Y)=fB(X,Y),~~B(X,fY)=(-1)^{|f||X|}B(X,Y),~~B(X,Y)=(-1)^{|X||Y|}B(Y,X)+\pi^{D^\bot}[X,Y];\nonumber\\
&A_{f\xi}X=fA_{\xi}X,~~A_{\xi}fX=(-1)^{|f||\xi|}A_{\xi}X,~~g^{D^\bot}(B(X,Y),\xi)=(-1)^{|X|(|Y|+|\xi|)}g^D(Y,A_{\xi}X).
\end{align}
When $D$ is a submanifold of $M$, we also have similar formula.\\
\begin{defn}\label{def1p}Let $(M^{m,n}, g)$ be a Riemannian $\mathbb{Z}_2$-manifold, the $W_2$-curvature tensor is also given by
\begin{align}\label{a8}
W_2(X,Y,Z,T) = g(K(X,Y)T,Z),
\end{align}
for any homogenous $X,Y,Z,T\in {\rm Vect}(M)$ and where
\begin{align}\label{a448}
K(X,Y)T:=R(X,Y)T-\frac{1}{(m-n-1)}[X\cdot Ric(Y,T)-(-1)^{|Y||T|}Ric(X,T)Y].
\end{align}
\end{defn}

\section{The $W_2$-curvature tensor on super twisted products}
\label{Section:3}
Let $(M=M_1\times_\mu M_2,g_\mu=\pi^*_1 g_1+\pi^*_1(\mu)\pi_2^*g_2)$ be the super twisted product with $|g|=|g_1|=|g_2|=0$ and $|\mu|=0$ and $\mu\in C^\infty(M)$ and its body $\varepsilon(\mu)>0$. For simplicity, we assume that $\mu=h^2$ with $|h|=0$. Let $\nabla^{L,\mu}$ be the Levi-Civita connection on $(M,g_\mu)$ and $\nabla^{L,M_1}$ (resp. $\nabla^{L,M_2}$) be the Levi-Civita connection on $(M_1,g_1)$ (resp. $(M_2,g_2)$).
\begin{lem}\label{lem1}
For $X,Y,Z\in{\rm Vect}(M_1)$ and $U,W,V\in {\rm Vect}(M_2)$, we have
\begin{align}\label{b1}
&(1)\nabla^{L,\mu}_XY=\nabla^{L,M_1}_XY,\nonumber\\
&(2)\nabla^{L,\mu}_XU=\frac{X(h)}{h}U,\nonumber\\
&(3)\nabla^{L,\mu}_UX=(-1)^{|U||X|}\frac{X(h)}{h}U,\nonumber\\
&(4)\nabla^{L,\mu}_UW=\frac{U(h)}{h}W+(-1)^{|U||W|}\frac{W(h)}{h}U-(-1)^{|V|(|U|+|W|)}\frac{g_2(U,W)}{h}{\rm grad}_{g_2}h\nonumber\\
&-hg_2(U,W){\rm grad}_{g_1}h+\nabla^{L,M_2}_UW.
\end{align}
\end{lem}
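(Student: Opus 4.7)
The natural tool is the Koszul formula \eqref{a1} of Theorem \ref{thm1}, applied to the metric $g_\mu=\pi_1^*g_1+h^2\pi_2^*g_2$. Since $g_\mu$ is even, the sign exponents in \eqref{a1} simplify, and the formula determines $\nabla^{L,\mu}_XY$ by pairing with arbitrary test vector fields $Z$ split into a piece in $\mathrm{Vect}(M_1)$ and a piece in $\mathrm{Vect}(M_2)$. The plan is to verify each of the four identities by reading off both pairings, using the facts that (i) brackets between lifts from $M_1$ and lifts from $M_2$ vanish, (ii) vector fields lifted from $M_1$ annihilate functions lifted from $M_2$ (and vice-versa), and (iii) for lifted vector fields $X\in\mathrm{Vect}(M_1)$ and $U\in\mathrm{Vect}(M_2)$ one has $X\langle U,W\rangle_{g_\mu}=X(h^2)g_2(U,W)+h^2\,X\langle U,W\rangle_{g_2}$, where the last term vanishes; the analogous identity for $U\langle X,Y\rangle_{g_\mu}$ simply vanishes.

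For (1), pair $\nabla^{L,\mu}_XY$ with a test field $Z\in\mathrm{Vect}(M_1)$: all metric cross-terms drop out and \eqref{a1} reduces to the Koszul formula for $g_1$, giving the $M_1$-component $\nabla^{L,M_1}_XY$. Pair with $W\in\mathrm{Vect}(M_2)$: every term in \eqref{a1} involves either an $M_2$-direction derivative of an $M_1$-inner product, or a bracket of the form $[X,W]$ or $[Y,W]$, all of which vanish, so the $M_2$-component of $\nabla^{L,\mu}_XY$ is zero. For (2), pair $\nabla^{L,\mu}_XU$ with $Y\in\mathrm{Vect}(M_1)$: the only nonzero contribution comes from $-Y\langle U,X\rangle_{g_\mu}=0$ and terms of the same type, all of which vanish, so the $M_1$-component is zero. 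Pairing with $W\in\mathrm{Vect}(M_2)$ the only surviving term is $X\langle U,W\rangle_{g_\mu}=2h\,X(h)\,g_2(U,W)$, which after division by $2\langle U,W\rangle_{g_\mu}=2h^2 g_2(U,W)$ identifies the $M_2$-component as $\frac{X(h)}{h}U$. Step (3) then follows immediately from (2) and the torsion-free condition $\nabla^{L,\mu}_UX=(-1)^{|U||X|}\nabla^{L,\mu}_XU+[U,X]$, using $[U,X]=0$.

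Finally, (4) is the one requiring the most bookkeeping. Pairing $\nabla^{L,\mu}_UW$ with a test field $V\in\mathrm{Vect}(M_2)$, the derivatives $U\langle W,V\rangle_{g_\mu}$, $W\langle V,U\rangle_{g_\mu}$, $V\langle U,W\rangle_{g_\mu}$ each split by the Leibniz rule into a factor of $h^2$ against the Koszul formula on $M_2$ and a factor from differentiating $h^2$; the first reassemble into $h^2$ times the Koszul expression for $\nabla^{L,M_2}_UW$, while the latter assemble, after the appropriate $\mathbb{Z}_2$ signs, into the $\mathrm{Vect}(M_2)$ part $\tfrac{U(h)}{h}W+(-1)^{|U||W|}\tfrac{W(h)}{h}U-(-1)^{|V|(|U|+|W|)}\tfrac{g_2(U,W)}{h}\mathrm{grad}_{g_2}h$, where the third term uses definition \eqref{a5} of the $g_2$-gradient. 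Pairing with $Y\in\mathrm{Vect}(M_1)$, only $-Y\langle U,W\rangle_{g_\mu}=-Y(h^2)g_2(U,W)=-2h\,Y(h)g_2(U,W)$ survives, which after dividing by $2$ and converting $Y(h)$ into pairing with $\mathrm{grad}_{g_1}h$ yields the $\mathrm{Vect}(M_1)$ contribution $-h\,g_2(U,W)\,\mathrm{grad}_{g_1}h$.

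The main obstacle will be keeping the $\mathbb{Z}_2$-signs consistent throughout (4): each term of the Koszul formula carries its own parity prefactor, and one must check that the parity of the identified $\mathrm{grad}_{g_2}h$ term matches the sign $(-1)^{|V|(|U|+|W|)}$ appearing in the statement, which is forced by the definition \eqref{a5} of the gradient in the super setting. Once the signs are tracked carefully, each identity is routine from the Koszul formula.
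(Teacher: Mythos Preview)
Your proposal is correct and follows essentially the same route as the paper's own proof: both argue directly from the Koszul formula \eqref{a1}, testing $\nabla^{L,\mu}$ against vector fields lifted from each factor, invoke $[X,V]=0$ for the cross-brackets, deduce (3) from (2) via torsion-freeness, and in (4) identify the $M_1$- and $M_2$-components via the gradient definition \eqref{a5}. The only difference is presentational---the paper records the intermediate Koszul pairings explicitly while you describe them in words---and your caveat about tracking the $\mathbb{Z}_2$-signs in (4) is exactly where the paper also does its careful bookkeeping.
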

\begin{proof}
(1)By (\ref{a1}) and $[X,V]=0$, we have $g_\mu(\nabla^{L,\mu}_XY,Z)=g_1(\nabla^{L,M_1}_XY,Z)$ and $g_\mu(\nabla^{L,\mu}_XY,V)=0$, so (1) holds.\\
(2)Similarly, we have $g_\mu(\nabla^{L,\mu}_XU,Y)=0$ and $2g_\mu(\nabla^{L,\mu}_XU,V)=\frac{X(\mu)}{\mu}g_\mu(U,V)$, so (2) holds by $\mu=h^2$.\\
(3)By (2) and $\nabla^{L,\mu}$ having no torsion, we have (3).\\
(4)By (\ref{a1}) and (\ref{a5}), we have
\begin{align}
2g_\mu(\nabla^{L,\mu}_UW,X)&=-(-1)^{|X|(|U|+|W|)}X(\mu)g_2(U,W)\nonumber\\
&=-(-1)^{|X|(|U|+|W|)}g_1(X,{\rm grad}_{g_1}(\mu))g_2(U,W)\nonumber\\
&=-g_\mu(g_2(U,W){\rm grad}_{g_1}(\mu),X).
\end{align}
And
\begin{align}\label{bbbb}
&2g_\mu(\nabla^{L,\mu}_UW,V)\nonumber\\
&=U(h^2)g_2(W,V)+(-1)^{|U||W|}W(h^2)g_2(U,V)-(-1)^{|V|(|U|+|W|)}V(h^2)g_2(U,W)+2g_\mu(\nabla^{L,M_2}_UW,V),
\end{align}
then we have
\begin{align}\label{bbpp}
P^{M_2}\nabla^{L,\mu}_UW&=\frac{U(h)}{h}W+(-1)^{|U||W|}\frac{W(h)}{h}U-(-1)^{|V|(|U|+|W|)}\frac{g_2(U,W)}{h}{\rm grad}_{g_2}h,
\end{align}
so (4) holds.
\end{proof}
Let $R^{L,\mu}$ denotes the curvature tensor of the Levi-Civita connection on $(M^{m,n},g_\mu)$ and let $R^{L,M_1}$ (resp. $R^{L,M_2}$) be the curvature tensor of the Levi-Civita connection on $(M_1,g_1)$ (resp. $(M_2,g_2)$). Let $H^h_{M_1}(X,Y):=XY(h)-\nabla^{L,{M_1}}_XY(h)$, then
$H^h_{M_1}(fX,Y)=fH^h_{M_1}(X,Y)$ and $H^h_{M_1}(X,fY)=(-1)^{|f||X|}fH^h_{M_1}(X,Y)$, where $H^h_{M_1}$ is a $(0,2)$ tensor.
\begin{prop}\label{prop2}
For $X,Y,Z\in{\rm Vect}(M_1)$ and $U,V,W\in {\rm Vect}(M_2)$, we have
\begin{align}\label{b3}
&(1)R^{L,\mu}(X,Y)Z=R^{L,M_1}(X,Y)Z,\nonumber\\
&(2)R^{L,\mu}(V,X)Y=-(-1)^{|V|(|X|+|Y|)}\frac{H^h_{M_1}(X,Y)}{h}V,\nonumber\\
&(3)R^{L,\mu}(X,Y)V=0,\nonumber\\
&(4)R^{L,\mu}(V,W)X=(-1)^{|W||X|}V\left(\frac{X(h)}{h}\right)W-(-1)^{|V|(|W|+|X|)}W\left(\frac{X(h)}{h}\right)V,\nonumber\\
&(5)R^{L,\mu}(X,V)W=(-1)^{(|X|+|V|)|W|}W\left(\frac{X(h)}{h}\right)V-(-1)^{|X|(|V|+|W|)+|V||W|}g_2(W,V){\rm grad}_{g_2}\frac{X(h)}{h}\nonumber\\
&-(-1)^{|X|(|V|+|W|)}\frac{g_\mu(V,W)}{h}\nabla^{L,M_1}_X({\rm grad}_{g_1}h),\nonumber\\
&(6)R^{L,\mu}(V,W)U=R^{L,M_2}(V,W)U+(-1)^{|U||W|}g_\mu(V,U){\rm grad}_{g_1}\frac{W(h)}{h}-(-1)^{(|U|+|W|)|V|}g_\mu(W,U){\rm grad}_{g_1}\frac{V(h)}{h}\nonumber\\
&-(-1)^{|V|(|W|+|U|)}\frac{({\rm grad}_{g_1}h)(h)}{h^2}g_2(W,U)V+(-1)^{|W||U|}\frac{({\rm grad}_{g_1}h)(h)}{h^2}g_2(V,U)W.
\end{align}
\end{prop}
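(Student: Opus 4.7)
The plan is to apply the definition of the Riemannian curvature $R_\nabla(X,Y)Z=\nabla_X\nabla_YZ-(-1)^{|X||Y|}\nabla_Y\nabla_XZ-\nabla_{[X,Y]}Z$ to each of the six combinations of horizontal and vertical fields, substituting the connection formulas from Lemma \ref{lem1}. The essential simplification throughout is that $[X,V]=0$ for $X\in{\rm Vect}(M_1)$ and $V\in{\rm Vect}(M_2)$, while brackets among horizontal (resp.\ vertical) fields remain horizontal (resp.\ vertical), so the bracket contribution is non-trivial only when the first two slots lie in the same factor.

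Case (1) is immediate: all three covariant derivatives collapse to $\nabla^{L,M_1}$ by Lemma \ref{lem1}(1), and $[X,Y]\in{\rm Vect}(M_1)$. Cases (2) and (3) reduce to single graded Leibniz expansions of $\frac{X(h)}{h}V$; a short computation shows that the spurious $\frac{X(h)Y(h)}{h^2}$ cross terms cancel once the $\mathbb{Z}_2$-commutativity $fg=(-1)^{|f||g|}gf$ is invoked, and the quantity $XY(h)-(\nabla^{L,M_1}_XY)(h)=H^h_{M_1}(X,Y)$ then emerges naturally in (2), while (3) collapses via the identity $[X,Y](h)=XY(h)-(-1)^{|X||Y|}YX(h)$.

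For (4), (5), and (6) the template is the same but the expansions are longer: I would expand $\nabla^{L,\mu}_UW$ through Lemma \ref{lem1}(4) into its four summands (two tangent to $M_2$, two along $\textnormal{grad}_{g_1}h$ and $\textnormal{grad}_{g_2}h$), differentiate each by the appropriate clause of Lemma \ref{lem1}, and sort the resulting terms according to whether they lie in ${\rm Vect}(M_1)$ or ${\rm Vect}(M_2)$. In (6) the bracket $[V,W]$ is vertical, so $\nabla^{L,\mu}_{[V,W]}U$ contributes a $\nabla^{L,M_2}_{[V,W]}U$ piece together with scalar corrections; combining this with the purely $M_2$-tangential parts of the iterated covariant derivatives reconstructs $R^{L,M_2}(V,W)U$, and the $M_1$-valued remainders assemble into the gradient corrections involving $\textnormal{grad}_{g_1}\frac{W(h)}{h}$ and $(\textnormal{grad}_{g_1}h)(h)/h^2$.

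The main obstacle is sign bookkeeping. Each line produces prefactors of the form $(-1)^{|X|(|Y|+|Z|)}$ from the curvature formula, from the graded Leibniz rule $\nabla_X(fY)=X(f)Y+(-1)^{|X||f|}f\nabla_XY$, and from the commutation relations used to push $h$-derivatives past vector fields, and the output signs in the statement (for instance the mixed $(-1)^{|X|(|V|+|W|)+|V||W|}$ in (5)) must be reproduced exactly. Case (6) is the most delicate, since three vertical fields interact with a horizontal gradient and two distinct $h$-dependent coefficients, and one must be careful when swapping $V$ and $W$ via the graded antisymmetry (\ref{a3}) that the extra sign $(-1)^{|V||W|}$ is carried through every term consistently.
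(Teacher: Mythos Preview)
Your plan for (1)--(4) coincides with the paper's proof: direct expansion of the curvature definition using Lemma~\ref{lem1}, with the observation that $[X,V]=0$ killing the bracket term in the mixed cases.

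For (5) and (6), however, the paper does \emph{not} carry out the direct expansion you outline. Instead, it exploits the algebraic symmetries of the curvature tensor to recycle the cases already computed. For (5), the $M_2$-component of $R^{L,\mu}(X,V)W$ is obtained by writing $g_\mu(R^{L,\mu}(X,V)W,W_1)=(-1)^{(|X|+|V|)(|W|+|W_1|)}g_\mu(R^{L,\mu}(W,W_1)X,V)$ and invoking (4); the $M_1$-component is obtained from $g_\mu(R^{L,\mu}(X,V)W,Y)=-(-1)^{|W||Y|}g_\mu(R^{L,\mu}(X,V)Y,W)$ and invoking (2), together with the identity $g_1(\nabla^{L,M_1}_X({\rm grad}_{g_1}h),Y)=H^h_{M_1}(X,Y)$. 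For (6), the $M_1$-component is similarly reduced to (4), while the $M_2$-component is handled via the super Gauss equation derived from Definition~\ref{def11}, using $B(U,W)=-hg_2(U,W)\,{\rm grad}_{g_1}h$.

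Your brute-force expansion would certainly work and has the virtue of being self-contained, but it requires tracking all four summands of $\nabla^{L,\mu}_UW$ through a second covariant derivative, producing on the order of a dozen terms per case with delicate sign prefactors---precisely the bookkeeping you flag as the main obstacle. The paper's route sidesteps most of this by reducing (5) and (6) to short metric computations against already-established formulas, at the cost of importing an external symmetry identity (equation (4.12) of \cite{Go}) and the submanifold machinery of Definition~\ref{def11}.
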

\begin{proof}(1)By Lemma 3.1 and (\ref{a2}), we can get (1).\\
\indent (2)By Lemma 3.1 and the Leibniz rule, we have
\begin{align}
\nabla^{L,\mu}_V\nabla^{L,\mu}_XY=(-1)^{|V|(|X|+|Y|)}\frac{\nabla^{L,M_1}_XY(h)}{h}V,~~-\nabla^{L,\mu}_X\nabla^{L,\mu}_VY=-(-1)^{|V|(|X|+|Y|)}\frac{XY(h)}{h}V,
\end{align}
by (\ref{a2}) and $[V,X]=0$ and the definition of $H^h_{M_1}(X,Y)$, we get (2).\\
\indent (3) By Lemma 3.1, we have $\nabla^{L,\mu}_X\nabla^{L,\mu}_YV=\frac{XY(h)}{h}V$ and by (\ref{a2}) and the definition of $[X,Y]$, we get (3).\\
\indent(4)By Lemma 3.1, we have
\begin{align}\label{l1}
\nabla^{L,\mu}_V\nabla^{L,\mu}_WX=(-1)^{|W||X|}\left[V\left(\frac{X(h)}{h}\right)W+(-1)^{|V||X|}\frac{X(h)}{h}\nabla^{L,\mu}_VW\right],
\end{align}
\begin{align}\label{l2}
-\nabla^{L,\mu}_W\nabla^{L,\mu}_VX=-(-1)^{|V||X|}\left[W\left(\frac{X(h)}{h}\right)V+(-1)^{|W||X|}\frac{X(h)}{h}\nabla^{L,\mu}_WV\right],
\end{align}
\begin{align}\label{l3}
-\nabla^{L,\mu}_{[V,W]}X=(-1)^{(|V|+|W|)|X|}\frac{X(h)}{h}[V,W],
\end{align}
then by (\ref{a2}) and $\nabla^{L,\mu}$ having no torsion, we get (4).\\
\indent (5)For $W_1\in {\rm Vect}(M_2)$, by (4) and (4.12) in \cite{Go}, we have
\begin{align}\label{l4}
&g_\mu(R^{L,\mu}(X,V)W,W_1)\nonumber\\
&=(-1)^{(|X|+|V|)(|W|+|W_1|)}g_\mu(R^{L,\mu}(W,W_1)X,V)\nonumber\\
&=(-1)^{(|X|+|V|)(|W|+|W_1|)}g_\mu((-1)^{|X||W_1|}W\left(\frac{X(h)}{h}\right)W_1-(-1)^{(|X|+|W_1|)|W|}W_1\left(\frac{X(h)}{h}\right)W,V)\nonumber\\
&=(-1)^{(|X|+|V|)|W|+|V||W_1|}W\left(\frac{X(h)}{h}\right)g_\mu(W_1,V)-(-1)^{(|X|+|W|)|W_1|+|V|(|W|+|W_1|)}W_1\left(\frac{X(h)}{h}\right)g_\mu(W,V),
\end{align}
by (\ref{a5}), we have
\begin{align}\label{ik}
W_1\left(\frac{X(h)}{h}\right)=(-1)^{|W_1||X|}g_2({\rm grad}_{g_2}\frac{X(h)}{h},W_1),
\end{align}
then,
\begin{align}\label{mmm}
P^{M_2}R^{L,\mu}(X,V)W=(-1)^{|X|(|V|+|W|)}W\left(\frac{X(h)}{h}\right)V-(-1)^{|X|(|V|+|W|)+|V||W|}g_2(W,V){\rm grad}_{g_2}\frac{X(h)}{h}.\nonumber\\
\end{align}
By Proposition 9 in \cite{BG} and (\ref{a2}), we have
\begin{align}
g_\mu(R^{L,\mu}(X,V)W,Y)&=-(-1)^{|W||Y|}g_\mu(R^{L,\mu}(X,V)Y,W)
&=-(-1)^{(|W|+|V|)|Y|}\frac{H^h_{M_1}(X,Y)}{h}g_\mu(V,W).
\end{align}
By the definition of ${\rm grad}_{g_1}(h)$ and $\nabla^{L,M_1}$ preserving the metric, we can get
\begin{align}
g_1(\nabla^{L,M_1}_X({\rm grad}_{g_1}h),Y)=(-1)^{|Y||g_1|}H^h_{M_1}(X,Y)=H^h_{M_1}(X,Y),
\end{align}
so
\begin{align}
g_\mu(R^{L,\mu}(X,V)W,Y)=-(-1)^{|X|(|V|+|W|)}g_\mu(\frac{g_\mu(V,W)}{h}\nabla^{L,M_1}_X({\rm grad}_{g_1}h),Y).
\end{align}
By (\ref{l1}) and (\ref{l4}), we get (5).\\
\indent (6) By (\ref{a5}), we have
\begin{align}\label{bb0}
g_\mu(R^{L,\mu}(V,W)U,X)&=-(-1)^{|X||U|}g_\mu(R^{L,\mu}(V,W)X,U)\nonumber\\
&=-(-1)^{|X||U|}g_\mu((-1)^{|X||W|}V\left(\frac{X(h)}{h}\right)W-(-1)^{|V|(|X|+|W|)}W\left(\frac{X(h)}{h}\right)V,U)\nonumber\\
&=(-1)^{|X||U|+|V|(|X|+|W|)}g_\mu(W\left(\frac{X(h)}{h}\right)V,U)-(-1)^{|X|(|U|+|W|)}g_\mu(V\left(\frac{X(h)}{h}\right)W,U).
\end{align}
By $XW=(-1)^{|X||W|}WX$ and $XV=(-1)^{|X||V|}VX$, we have
\begin{align}\label{sss}
P^{M_1}R^{L,\mu}(V,W)U=(-1)^{|U||W|}g_\mu(V,U){\rm grad}_{g_2}\frac{W(h)}{h}-(-1)^{(|U|+|W|)|V|}g_\mu(W,U){\rm grad}_{g_2}\frac{V(h)}{h}.
\end{align}
By Definition \ref{def6} and Definition \ref{def11}, we have
\begin{equation}
R^D(X,Y)Z=(R(X,Y)Z)^\top-(-1)^{|Y||Z|}A_{B(X,Z)}Y+(-1)^{|X|(|Y|+|Z|)}A_{B(Y,Z)}X,\nonumber\\
\end{equation}
where $R^D$ (resp. $R$) denotes the curvature tensor of $\nabla^D$ (resp. $\nabla$) and $R^\top$ denotes tangential component of $R$.\\
Then by $g(B(X,Y),\xi)=(-1)^{(|Y|+|\xi|)|X|}g(Y,A_\xi X),$ we have
\begin{align}\label{jj3}
g(A_\xi X,Y)=(-1)^{|Y|(|X|+|\xi|)}g(Y,A_\xi X)=(-1)^{|\xi|(|X|+|Y|)}g(B(X,Y),\xi),
\end{align}
so
\begin{align}\label{nn}
&g(R^D(X,Y)Z,W)\nonumber\\
&=g(R(X,Y)Z),W)-(-1)^{|Y||Z|}g(A_{B(X,Z)}Y,W)+(-1)^{|X|(|Y|+|Z|)}g(A_{B(Y,Z)}X,W)\nonumber\\
&=g(R(X,Y)Z),W)-(-1)^{|X|(|Y|+|W|)+|W||Z|}g(B(Y,W),B(X,Z))+(-1)^{|W|(|Y|+|Z|)}g(B(X,W),B(Y,Z)).
\end{align}
By $B(U,W)=-hg_2(U,W){\rm grad}_{g_1}h=-\frac{g_\mu(U,W)}{h}{\rm grad}_{g_1}h,$ we have
\begin{align}\label{oio}
g_\mu(R^{M_2}(V,W)U,W_1)&=g_\mu(R^{L,\mu}(X,Y)Z),W_1)-(-1)^{|V|(|W|+|W_1|)+|W_1||U|}g_\mu(B(W,W_1),B(V,U))\nonumber\\
&+(-1)^{|W_1|(|W|+|U|)}g_\mu(B(V,W_1),B(W,U))\nonumber\\
&=g_\mu(R^{L,\mu}(X,Y)Z),W_1)+(-1)^{|V|(|W|+|U|)}\frac{|{\rm grad}_{g_1}h|^2_g}{h^2}g_\mu(W,U)g_\mu(V,W_1)\nonumber\\
&-(-1)^{|W||U|}\frac{|{\rm grad}_{g_1}h|^2_g}{h^2}g_\mu(V,U)g_\mu(W,W_1),\nonumber\\
&=g_\mu(R^{L,\mu}(X,Y)Z),W_1)+(-1)^{|V|(|W|+|U|)}\frac{({\rm grad}_{g_1}h)(h)}{h^2}g_\mu(W,U)g_\mu(V,W_1)\nonumber\\
&-(-1)^{|W||U|}\frac{({\rm grad}_{g_1}h)(h)}{h^2}g_\mu(V,U)g_\mu(W,W_1),
\end{align}
then, we get (6).
\end{proof}
In the following, we compute the Ricci tensor of manifold $M^{m,n}$. Let $M_1$ (resp. $M_2$) have the $(p,m_1)$ (resp. $(q,m_2)$) dimension, where $n_1=p-m_1,~n_2=q-m_2$ and $m-n=n_1+n_2$. Let $\partial_{x^I}=\{\partial_{x^a},\partial_{\xi^A}\}$ (resp.
$\partial_{y^J}=\{\partial_{y^b},\partial_{\eta^B}\}$) denote
the natural tangent frames on $M_1$ (resp. $M_2$). Let ${\rm Ric}^{L,\mu}$ (resp. ${\rm Ric}^{L,M_1}$, ${\rm Ric}^{L,M_2}$)  denote the Ricci tensor of $(M,g_\mu)$ (resp. $(M_1,g_1)$, $(M_2,g_2)$).  Then by (\ref{a4}), (\ref{a7}) and (\ref{b3}), we have
\begin{prop}\label{prop12}
The following equalities holds
\begin{align}\label{b18}
(1){\rm Ric}^{L,\mu}(\partial_{x^L},\partial_{x^K})&={\rm Ric}^{L,M_1}(\partial_{x^L},\partial_{x^K})-\frac{(q-m_2)}{h}H^h_{M_1}(\partial_{x^L},\partial_{x^K}),\nonumber\\
(2){\rm Ric}^{L,\mu}(\partial_{x^L},\partial_{y^J})&=-(q-m_2-1)(-1)^{|\partial_{x_L}||\partial_{y^J}|}\partial_{y^J}\left(\frac{\partial_{x^L}(h)}{h}\right),\nonumber\\
(3){\rm Ric}^{L,\mu}(\partial_{y^J},\partial_{x^L})&=-(q-m_2-1)\partial_{y^J}\left(\frac{\partial_{x^L}(h)}{h}\right),\nonumber\\
(4){\rm Ric}^{L,\mu}(\partial_{y^L},\partial_{y^J})&={\rm Ric}^{L,M_2}(\partial_{y^L},\partial_{y^J})-g_\mu(\partial_{y^L},\partial_{y^J})\cdot[\frac{\triangle^L_{g_1}(h)}{h}+(q-m_2-1)\frac{({\rm grad}_{g_1}h)(h)}{h^2}].
\end{align}
\end{prop}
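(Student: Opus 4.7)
The plan is to unfold Definition~\ref{def7} on the natural frame $\{\partial_{x^I}\}\cup\{\partial_{y^J}\}$ of $M=M_1\times_\mu M_2$, split the supertrace into its $M_1$- and $M_2$-parts, and substitute the curvature formulas of Proposition~\ref{prop2} case by case. Definitions~\ref{def9} and \ref{def10} convert the $M_1$-trace of a Hessian of $h$ into the connection Laplacian $\triangle^L_{g_1}(h)$ appearing in part (4).

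For (1), Proposition~\ref{prop2}(1) reduces the $M_1$-trace to ${\rm Ric}^{L,M_1}(\partial_{x^L},\partial_{x^K})$; Proposition~\ref{prop2}(2) gives a scalar multiple of the dummy frame vector $\partial_{y^J}$, so extracting the $\partial_{y^J}$-coefficient and summing the weighted supertrace $\sum_J(-1)^{|\partial_{y^J}|(1+|\partial_{x^L}|+|\partial_{x^K}|)}$ produces the super-dimensional factor $(q-m_2)$ in front of $-H^h_{M_1}/h$. For (2) and (3), Proposition~\ref{prop2}(3) kills the $M_1$-trace, while the $M_2$-trace through Propositions~\ref{prop2}(4)--(5) leaves only the diagonal term $\partial_{y^J}(\partial_{x^L}(h)/h)$; the drop from $(q-m_2)$ to $(q-m_2-1)$ reflects that one $M_2$-direction is already pinned by the free index, and the sign discrepancy between (2) and (3) is precisely the symmetrization factor $(-1)^{|X||Y|}$ built into~(\ref{a4}).

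For (4), the $M_2$-trace of Proposition~\ref{prop2}(6) supplies ${\rm Ric}^{L,M_2}(\partial_{y^L},\partial_{y^J})$ together with two diagonal corrections proportional to $({\rm grad}_{g_1}h)(h)/h^2$, which combine through the supertrace into the coefficient $(q-m_2-1)$. The $M_1$-trace of Proposition~\ref{prop2}(5) annihilates its first two terms (they lie in $M_2$) and contracts the remaining $-g_\mu(V,W)\nabla^{L,M_1}_{\partial_{x^I}}({\rm grad}_{g_1}h)/h$ piece into $-g_\mu(\partial_{y^L},\partial_{y^J})\triangle^L_{g_1}(h)/h$ via Definition~\ref{def10}, with the required matching of sign conventions $|h|=|g_1|=0$ ensuring the Laplacian is picked out with the correct coefficient.

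The main obstacle is the bookkeeping of the $\mathbb{Z}_2$-signs. Every piece of Proposition~\ref{prop2} carries its own sign factors, and Definition~\ref{def7} adds the symmetrization $(-1)^{|X||Y|}$ together with the trace factor $(-1)^{|\partial_{x^I}|(|\partial_{x^I}|+|X|+|Y|)}$. Checking that these combine correctly to yield the tensoriality of $H^h_{M_1}$ implicit in (1), the cross-compatibility of (2) and (3) under ${\rm Ric}^{L,\mu}(X,Y)=(-1)^{|X||Y|}{\rm Ric}^{L,\mu}(Y,X)$, and the distinction between the prefactors $(q-m_2)$ in (1) and $(q-m_2-1)$ in (2)--(4), is the delicate part of the computation.
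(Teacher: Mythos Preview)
Your plan is correct and matches the paper's own proof essentially step for step: the paper likewise expands Definition~\ref{def7} over the split frame, feeds in the six curvature identities of Proposition~\ref{prop2}, uses torsion-freeness to identify the symmetrized Hessian in~(1), and invokes Definitions~\ref{def9}--\ref{def10} to recognise the $M_1$-trace in~(4) as $\triangle^L_{g_1}(h)$. The only additional ingredient the paper makes explicit that you might not anticipate is the careful manipulation of the ${\rm grad}_{g_2}\bigl(\partial_{x^L}(h)/h\bigr)$ term in the $M_2$-trace for~(2), where one must use the local formula for the gradient and the symmetry $g^{\alpha K}=(-1)^{|\partial_{y^K}|^2+|\partial_{y^\alpha}|^2+|\partial_{y^K}||\partial_{y^\alpha}|}g^{K\alpha}$ to show that its contribution collapses to a single copy of $(-1)^{|\partial_{x^L}||\partial_{y^J}|}\partial_{y^J}\bigl(\partial_{x^L}(h)/h\bigr)$, which then cancels against the diagonal piece of Proposition~\ref{prop2}(5) to yield the coefficient $(q-m_2-1)$ rather than $(q-m_2)$.
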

\begin{proof}
(1)By Definition \ref{def7} and Proposition \ref{prop2}, we have
\begin{align}\label{f17}
{\rm Ric}^{L,\mu}(\partial_{x^L},\partial_{x^K})&=\sum_I(-1)^{|\partial_{x^I}|(|\partial_{x^I}|+|\partial_{x^L}|+|\partial_{x^K}|)}\frac{1}{2}[R^{L,\mu}(\partial_{x^I},\partial_{x^L})\partial_{x^K}+(-1)^{|\partial_{x^L}||\partial_{x^K}|}R^{L,\mu}(\partial_{x^I},\partial_{x^K})\partial_{x^L}]^I\nonumber\\
&+\sum_J(-1)^{|\partial_{y^J}|(|\partial_{y^J}|+|\partial_{x^L}|+|\partial_{x^K}|)}\frac{1}{2}[R^{L,\mu}(\partial_{y^J},\partial_{x^L})\partial_{x^K}+(-1)^{|\partial_{x^L}||\partial_{x^K}|}R^{L,\mu}(\partial_{y^J},\partial_{x^K})\partial_{x^L}]^J\nonumber\\
&={\rm Ric}^{L,M_1}(\partial_{x^L},\partial_{x^K})+\sum_J(-1)^{|\partial_{y^J}|(|\partial_{y^J}|+|\partial_{x^L}|+|\partial_{x^K}|)}\frac{1}{2}\bigg[-(-1)^{|\partial_{y^J}|(|\partial_{x^L}|+|\partial_{x^K}|)}\frac{H_{M_1}^h(\partial_{x^L},\partial_{x^K})}{h}\nonumber\\
&\partial_{y^J}-(-1)^{|\partial_{x^L}||\partial_{x^K}|}(-1)^{|\partial_{y^J}|(|\partial_{x^L}|+|\partial_{x^K}|)}\frac{H_{M_1}^h(\partial_{x^K},\partial_{x^L})}{h}\partial_{y^J}\bigg]^J\nonumber\\
&={\rm Ric}^{L,M_1}(\partial_{x^L},\partial_{x^K})-\sum_J(-1)^{|\partial_{y^J}||\partial_{y^J}|}\frac{1}{2}\bigg[\frac{H_{M_1}^h(\partial_{x^L},\partial_{x^K})}{h}+(-1)^{|\partial_{x^L}||\partial_{x^K}|}\frac{H_{M_1}^h(\partial_{x^K},\partial_{x^L})}{h}\bigg],
\end{align}
by $(-1)^{|\partial_{y^J}||\partial_{y^J}|}=\sum_{j=1}^q(-1)^0+\sum_{k=1}^{m_2}(-1)^1=q-m_2$, we have
\begin{align}\label{mnm}
{\rm Ric}^{L,\mu}(\partial_{x^L},\partial_{x^K})={\rm Ric}^{L,M_1}(\partial_{x^L},\partial_{x^K})-\frac{(q-m_2)}{2h}\bigg[\frac{H_{M_1}^h(\partial_{x^L},\partial_{x^K})}{h}+(-1)^{|\partial_{x^L}||\partial_{x^K}|}\frac{H_{M_1}^h(\partial_{x^K},\partial_{x^L})}{h}\bigg],\nonumber\\
\end{align}
by $\nabla^{L,M_1}$ having no torsion, we have
\begin{align}\label{lll}
&\frac{H_{M_1}^h(\partial_{x^L},\partial_{x^K})}{h}-(-1)^{|\partial_{x^L}||\partial_{x^K}|}\frac{H_{M_1}^h(\partial_{x^K},\partial_{x^L})}{h}\nonumber\\
&=\partial_{x^L}(\partial_{x^K}(h))-\nabla_{\partial_{x^L}}^{L,M_1}\partial_{x^K}(h)-(-1)^{|\partial_{x^L}||\partial_{x^K}|}\partial_{x^K}(\partial_{x^L}(h))+(-1)^{|\partial_{x^L}||\partial_{x^K}|}\nabla_{\partial_{x^K}}^{L,M_1}\partial_{x^L}(h)\nonumber\\
&=-[-[\partial_{x^L},\partial_{x^K}](h)+\nabla_{\partial_{x^L}}^{L,M_1}\partial_{x^K}(h)-(-1)^{|\partial_{x^L}||\partial_{x^K}|}\partial_{x^K}(\partial_{x^L}(h))]\nonumber\\
&=-T^{L,M_1}(\partial_{x^L},\partial_{x^K})(h)\nonumber\\
&=0,
\end{align}
so (1) holds.\\
(2)By Definition \ref{def7}, we get
\begin{align}\label{f18}
{\rm Ric}^{L,\mu}(\partial_{x^L},\partial_{y^J})&=\sum_I(-1)^{|\partial_{x^I}|(|\partial_{x^I}|+|\partial_{x^L}|+|\partial_{y^J}|)}\frac{1}{2}[R^{L,\mu}(\partial_{x^I},\partial_{x^L})\partial_{y^J}+(-1)^{|\partial_{x^L}||\partial_{y^J}|}R^{L,\mu}(\partial_{x^I},\partial_{y^J})\partial_{x^L}]^I\nonumber\\
&+\sum_K(-1)^{|\partial_{y^K}|(|\partial_{y^K}|+|\partial_{x^L}|+|\partial_{y^J}|)}\frac{1}{2}[R^{L,\mu}(\partial_{y^K},\partial_{x^L})\partial_{x^K}+(-1)^{|\partial_{x^L}||\partial_{y^J}|}R^{L,\mu}(\partial_{y^K},\partial_{y^J})\partial_{x^L}]^K\nonumber\\
&=\sum_K(-1)^{|\partial_{y^K}|(|\partial_{y^K}|+|\partial_{x^L}|+|\partial_{y^J}|)}\frac{1}{2}[R^{L,\mu}(\partial_{y^K},\partial_{x^L})\partial_{x^K}+(-1)^{|\partial_{x^L}||\partial_{y^J}|}R^{L,\mu}(\partial_{y^K},\partial_{y^J})\partial_{x^L}]^K\nonumber\\
&=\sum_K(-1)^{|\partial_{y^K}|(|\partial_{y^K}|+|\partial_{x^L}|+|\partial_{y^J}|)}\frac{1}{2}[-(-1)^{|\partial_{y^K}||\partial_{x^L}|}R^{L,\mu}(\partial_{x^L},\partial_{y^K})\partial_{y^J}+(-1)^{|\partial_{x^L}||\partial_{y^J}|}\nonumber\\
&R^{L,\mu}(\partial_{y^K},\partial_{y^J})\partial_{x^L}]^K,
\end{align}
by Propsition \ref{prop2}, we get
\begin{align}\label{m90}
&R^{L,\mu}(\partial_{x^L},\partial_{y^K})\partial_{y^J}\nonumber\\
&=(-1)^{(|\partial_{y^K}|+|\partial_{x^L}|)|\partial_{y^J}|}\partial_{y^J}\left(\frac{\partial_{x^L}(h)}{h}\right)\partial_{y^K}-(-1)^{|\partial_{x^L}|(|\partial_{y^K}|+|\partial_{y^J}|)+|\partial_{y^K}||\partial_{y^J}|}g_2(\partial_{y^J},\partial_{y^K}){\rm grad}_{g_2}\frac{\partial_{x^L}(h)}{h}.
\end{align}
By \cite{BG}, we have \\
\begin{align}\label{f99}
{\rm grad}_gf=\sum^I(-1)^{|f||g|+|\partial_{y^J}|(|f|+|g|)}\frac{\partial f}{\partial_{y^J}}g^{JI}\partial_{y^I},\nonumber\\
\end{align}
and
\begin{align}\label{mkms}
g^{\alpha K}&=(-1)^{|\partial_{y^K}|^2+|\partial_{y^\alpha}|^2+|g|^2+|\partial_{y^K}||\partial_{y^\alpha}|}g^{K\alpha},
\end{align}
then
\begin{align}\label{kok}
&[(-1)^{|\partial_{x^L}|(|\partial_{y^K}|+|\partial_{y^J}|)+|\partial_{y^K}||\partial_{y^J}|}g_2(\partial_{y^J},\partial_{y^K}){\rm grad}_{g_2}(\partial_{x^L}(lnh))]^{\partial_{y^K}}\nonumber\\
&=(-1)^{|\partial_{x^L}|(|\partial_{y^K}|+|\partial_{y^J}|)+|\partial_{y^K}||\partial_{y^J}|}g_2(\partial_{y^J},\partial_{y^K})\sum_\alpha\partial_{x^L} (\partial_{y^\alpha}(lnh))g_2^{\alpha K}\nonumber\\
&=(-1)^{|\partial_{x^L}|(|\partial_{y^K}|+|\partial_{y^J}|)+|\partial_{y^K}||\partial_{y^J}|}(-1)^{(|\partial_{y^K}|+|\partial_{y^J}|)(|\partial_{y^\alpha}|+|\partial_{x^L}|)}\partial_{x^L} (\partial_{y^\alpha}(lnh))g_2(\partial_{y^J},\partial_{y^K})g_2^{\alpha K}\nonumber\\
&=\sum_\alpha(-1)^{|\partial_{y^\alpha}|+|\partial_{y^K}|+|\partial_{y^\alpha}||\partial_{y^K}|}(-1)^{(|\partial_{y^J}|+|\partial_{y^K}|)|\partial_{x^L}|+|\partial_{y^J}||\partial_{y^K}|}(-1)^{(|\partial_{y^J}|+|\partial_{y^K}|)(|\partial_{x^L}|+|\partial_{y^\alpha}|)}\nonumber\\
&\partial_{x^L} (\partial_{y^\alpha}(lnh))g_2(\partial_{y^J},\partial_{y^K})g_2^{K\alpha}\nonumber\\
&=\sum_\alpha(-1)^{|\partial_{y^K}|^2+|\partial_{y^\alpha}|}(-1)^{(|\partial_{y^\alpha}|+|\partial_{y^K}|)|\partial_{y^J}|}\partial_{x^L} (\partial_{y^\alpha}(lnh))g_2(\partial_{y^J},\partial_{y^K})g_2^{K\alpha},
\end{align}
so
\begin{align}\label{mklm}
&\sum_K(-1)^{(|\partial_{y^K}|+|\partial_{y^J}|+|\partial_{x^L}|)|\partial_{y^K}|}(-1)^{|\partial_{y^K}||\partial_{x^L}|}[(-1)^{|\partial_{x^L}|(|\partial_{y^K}|+|\partial_{y^J}|)+|\partial_{y^K}||\partial_{y^J}|}g_2(\partial_{y^J},\partial_{y^K}){\rm grad}_{g_2}(\partial_{x_L}(lnh))]^{\partial_{y^K}}\nonumber\\
&=\sum_{K,\alpha}(-1)^{|\partial_{y^\alpha}|+|\partial_{y^J}||\partial_{y^\alpha}|}\partial_{x^L}(\partial_{y^\alpha}(lnh))g_2(\partial_{y^J},\partial_{y^K})g_2^{K\alpha}\nonumber\\
&=\sum_{\alpha}(-1)^{|\partial_{y^\alpha}|+|\partial_{y^J}||\partial_{y^\alpha}|}\partial_{x^L}(\partial_{y^\alpha}(lnh))\delta_J^\alpha\nonumber\\
&=(-1)^{|\partial_{y^J}|+|\partial_{y^J}|^2}\partial_{x^L}(\partial_{y^J}(lnh))\nonumber\\
&=\partial_{x^L}(\partial_{y^J}(lnh))\nonumber\\
&=(-1)^{|\partial_{y^J}||\partial_{x^L}|}\partial_{y^J}(\partial_{x^L}(lnh))\nonumber\\
&=(-1)^{|\partial_{y^J}||\partial_{x^L}|}\partial_{y^J}\left(\frac{\partial_{x^L}(h)}{h}\right).
\end{align}
And
\begin{align}\label{xxx}
&\frac{1}{2}\sum_K(-1)^{|\partial_{y^K}|(|\partial_{y^K}|+|\partial_{x^L}|+|\partial_{y^J}|)}[-(-1)^{|\partial_{y^K}||\partial_{x^L}|}R^{L,\mu}(\partial_{x^L},\partial_{y^K})\partial_{y^J}]^{\partial_{y^K}}\nonumber\\
&=-\frac{1}{2}(q-m_2-1)(-1)^{|\partial_{x_L}||\partial_{y^J}|}\partial_{y^J}\left(\frac{\partial_{x^L}(h)}{h}\right),
\end{align}
similarly,
\begin{align}\label{ddd}
&\frac{1}{2}\sum_K(-1)^{|\partial_{y^K}|(|\partial_{y^K}|+|\partial_{x^L}|+|\partial_{y^J}|)}[(-1)^{|\partial_{x^L}||\partial_{y^J}|}R^{L,\mu}(\partial_{y^K},\partial_{y^J})\partial_{x^L}]^K\nonumber\\
&=-\frac{1}{2}(q-m_2-1)(-1)^{|\partial_{x_L}||\partial_{y^J}|}\partial_{y^J}\left(\frac{\partial_{x^L}(h)}{h}\right),
\end{align}
so (2) holds.\\
(3)By ${\rm Ric}^{L,\mu}(\partial_{y^J},\partial_{x^L})=(-1)^{|\partial_{x^L}||\partial_{y^J}|}{\rm Ric}^{L,\mu}(\partial_{x^L},\partial_{y^J}),$ (3) holds.\\
(4)By Definition \ref{def7} and Proposition \ref{prop2}, we have
\begin{align}\label{f19}
{\rm Ric}^{L,\mu}(\partial_{y^L},\partial_{y^J})&=\sum_I(-1)^{|\partial_{x^I}|(|\partial_{x^I}|+|\partial_{y^L}|+|\partial_{y^J}|)}\frac{1}{2}[R^{L,\mu}(\partial_{x^I},\partial_{y^L})\partial_{y^J}+(-1)^{|\partial_{y^L}||\partial_{y^J}|}R^{L,\mu}(\partial_{x^I},\partial_{y^J})\partial_{y^L}]^I\nonumber\\
&+\sum_K(-1)^{|\partial_{y^K}|(|\partial_{y^K}|+|\partial_{y^L}|+|\partial_{y^J}|)}\frac{1}{2}[R^{L,\mu}(\partial_{y^K},\partial_{y^L})\partial_{y^J}+(-1)^{|\partial_{y^L}||\partial_{y^J}|}R^{L,\mu}(\partial_{y^K},\partial_{y^J})\partial_{y^L}]^K\nonumber\\
&=\Delta_1+\Delta_2,
\end{align}
where
\begin{align}\label{1}
&\Delta_1:=\sum_I(-1)^{|\partial_{x^I}|(|\partial_{x^I}|+|\partial_{y^L}|+|\partial_{y^J}|)}\frac{1}{2}[R^{L,\mu}(\partial_{x^I},\partial_{y^L})\partial_{y^J}+(-1)^{|\partial_{y^L}||\partial_{y^J}|}R^{L,\mu}(\partial_{x^I},\partial_{y^J})\partial_{y^L}]^I\nonumber\\
&\Delta_2:=\sum_K(-1)^{|\partial_{y^K}|(|\partial_{y^K}|+|\partial_{y^L}|+|\partial_{y^J}|)}\frac{1}{2}[R^{L,\mu}(\partial_{y^K},\partial_{y^L})\partial_{y^J}+(-1)^{|\partial_{y^L}||\partial_{y^J}|}R^{L,\mu}(\partial_{y^K},\partial_{y^J})\partial_{y^L}]^K.
\end{align}
by Propsition \ref{b3}, we have
\begin{align}\label{f20}
&\sum_I(-1)^{|\partial_{x^I}|(|\partial_{x^I}|+|\partial_{y^L}|+|\partial_{y^J}|)}[R^{L,\mu}(\partial_{x^I},\partial_{y^L})\partial_{y^J}]^I\nonumber\\
&=-\sum_I(-1)^{|\partial_{x^I}|(|\partial_{x^I}|+|g|)}\frac{g_\mu(\partial_{y^L},\partial_{y^J})}{h}\bigg[\frac{\nabla^{L,M_1}_{\partial_{x^I}}({\rm grad}_{g_1}h)}{h}\bigg]^I\nonumber\\
&=-\frac{g_\mu(\partial_{y^L},\partial_{y^J})}{h}\sum_I(-1)^{|\partial_{x^I}|(|\partial_{x^I}|+|{\rm grad}_{g_1}h|)}\bigg[\frac{\nabla^{L,M_1}_{\partial_{x^I}}({\rm grad}_{g_1}h)}{h}\bigg]^I\nonumber\\
&=-\frac{g_\mu(\partial_{y^L},\partial_{y^J})}{h}Div_{\nabla^{L,M_1}}{\rm grad}_{g_1}h\nonumber\\
&=-\frac{g_\mu(\partial_{y^L},\partial_{y^J})}{h}\triangle_{g_1}^L(h),
\end{align}
then, we get
\begin{align}\label{f21}
\Delta_1&=\frac{1}{2}[-g_\mu(\partial_{y^L},\partial_{y^J})\frac{\triangle^L_{g_1}(h)}{h}-(-1)^{|\partial_{y^L}||\partial_{y^J}|}g_\mu(\partial_{y^L},\partial_{y^J})\frac{\triangle^L_{g_1}(h)}{h}],\nonumber\\
&=-\frac{g_\mu(\partial_{y^L},\partial_{y^J})}{h}\triangle_{g_1}^L(h),
\end{align}
\begin{align}\label{f22}
\Delta_2&={\rm Ric}^{L,M_2}(\partial_{y^L},\partial_{y^J})+\sum_K(-1)^{|\partial_{y^K}|(|\partial_{y^K}|+|\partial_{y^L}|+|\partial_{y^J}|)}\frac{1}{2}\bigg\{(-1)^{|\partial_{y^L}||\partial_{y^J}|}g_\mu(\partial_{y^K},\partial_{y^J}){\rm grad}_{g_2}\frac{\partial_{y^L}(h)}{h}\nonumber\\
&-(-1)^{(|\partial_{y^L}|+|\partial_{y^J}|)|\partial_{y^K}|}g_\mu(\partial_{y^L},\partial_{y^J}){\rm grad}_{g_2}\frac{\partial_{y^K}(h)}{h}-(-1)^{(|\partial_{y^L}|+|\partial_{y^J}|)|\partial_{y^K}|}\nonumber\\
&-(-1)^{|\partial_{y^K}|(|\partial_{y^L}|+|\partial_{y^J}|)}\frac{({\rm grad}_{g_1}h)(h)}{h^2}g_\mu(\partial_{y^L},\partial_{y^J})\partial_{y^K}+(-1)^{|\partial_{y^L}||\partial_{y^J}|}\frac{({\rm grad}_{g_1}h)(h)}{h^2}g_\mu(\partial_{y^K},\partial_{y^J})\partial_{y^L}\nonumber\\
&+(-1)^{|\partial_{y^L}||\partial_{y^J}|}\bigg[(-1)^{|\partial_{y^L}||\partial_{y^J}|}g_\mu(\partial_{y^K},\partial_{y^L}){\rm grad}_{g_2}\frac{\partial_{y^J}(h)}{h}\nonumber\\
&-(-1)^{(|\partial_{y^L}|+|\partial_{y^J}|)|\partial_{y^K}|}g_\mu(\partial_{y^K},\partial_{y^L})({\rm grad}_{g_2}\frac{\partial_{y^K}(h)}{h}-(-1)^{(|\partial_{y^L}|+|\partial_{y^J}|)|\partial_{y^K}|}\frac{({\rm grad}_{g_1}h)(h)}{h^2}g_\mu(\partial_{y^J},\partial_{y^L})\partial_{y^K}\nonumber\\
&+(-1)^{|\partial_{y^L}||\partial_{y^J}|}\frac{({\rm grad}_{g_1}h)(h)}{h^2}g_\mu(\partial_{y^K},\partial_{y^L})\partial_{y^J}\bigg]\bigg\},\nonumber\\
&=Ric^{L,M_2}(\partial_{y^L},\partial_{y^J})-g_\mu(\partial_{y^L},\partial_{y^J})(q-m_2-1)\frac{({\rm grad}_{g_1}h)(h)}{h^2},
\end{align}
then,\\
\begin{align}\label{f23}
&\Delta_1+\Delta_2={\rm Ric}^{L,M_2}(\partial_{y^L},\partial_{y^J})-g_\mu(\partial_{y^L},\partial_{y^J})
[\frac{\triangle^L_{g_1}(h)}{h}+(q-m_2-1)\frac{({\rm grad}_{g_1}h)(h)}{h^2}],
\end{align}
so (4) holds.\\
\end{proof}
\begin{thm}\label{thm:9}
Let $M=M_1\times M_2$ be a singly twisted product manifold with the
metric tensor $g=g_1\oplus h^2g_2$. If $X,Y,Z\in Vect(M_1)$, $U,V,Q\in Vect(M_2)$, then
\begin{align}\label{nnn}
(1)K^{L,\mu}(X,Y)Z&=K^{L,M_1}(X,Y)Z+\frac{n_2}
{(m-n-1)(n_1-1)}[X\cdot Ric^{L,M_1}(Y,Z)-(-1)^{|Y||Z|}Ric^{L,M_1}(X,Z)Y]\nonumber\\
&+\frac{q-m_2}{(m-n-1)h}[X\cdot H_{M_1}^h(Y,Z)-(-1)^{|Y||Z|}H_{M_1}^h(X,Z)Y]\nonumber\\
(2)K^{L,\mu}(X,Y)Q&=-\frac{q-m_2-1}{m-n-1}\left[(-1)^{|Y||Q|}Q\left(\frac{X(h)}{h}\right)Y-X\cdot Q\left(\frac{Y(h)}{h}\right)\right]\nonumber\\
(3)K^{L,\mu}(U,V)X&=(-1)^{|V||X|}U\left(\frac{X(h)}{h}\right)V+(-1)^{|U|(|V|+|X|)}V\left(\frac{X(h)}{h}\right)U\nonumber\\
&+\frac{q-m_2-1}{m-n-1}\left[(-1)^{|V||X|}U\cdot X\left(\frac{X(h)}{h}\right)-(-1)^{|X|(|V|+|U|)}X\left(\frac{U(h)}{h}\right)V\right]\nonumber\\
(4)K^{L,\mu}(X,V)Y&=\frac{1}{m-n-1}(-1)^{|V||Y|}[(m-n-q+m_2-1)H_{M_1}^h(X,Y)+Ric^{L,M_1}(X,Y)]V\nonumber\\
&+\frac{q-m_2-1}{m-n-1}X\cdot Y\frac{V(h)}{h}\nonumber\\
(5)K^{L,\mu}(X,U)V&=(-1)^{|X|(|V|+|U|)}V\left(\frac{X(h)}{h}\right)U-(-1)^{|X|(|V|+|U|)+|U||V|}g_2(U,V)[h\triangle_{g_1}^L(h)\nonumber\\
&+(q-m_2-1){\rm grad}_{g_1}(h)(h)]+\frac{q-m_2-1}{m-n-1}(-1)^{|V||U|}V\frac{X(h)}{h}U\nonumber\\
(6)K^{L,\mu}(U,V)Q&=K^{L,M_2}(U,V)Q+\frac{n_1}
{(m-n-1)(n_2-1)}[U\cdot Ric^{L,M_2}(V,Q)-(-1)^{|V||Q|}Ric^{L,M_2}(U,Q)V]\nonumber\\
&+(-1)^{|V||Q|}g_\mu(U,Q){\rm grad}_{g_2}\frac{V(h)}{h}-(-1)^{|U|(|V|+|Q|)}g_\mu(V,Q){\rm grad}_{g_2}\frac{U(h)}{h}\nonumber\\
&-(-1)^{|U|(|V|+|Q|)}\frac{{\rm grad}_{g_1}(h)(h)}{h^2}g_2(V,Q)U+(-1)^{|V||Q|}\frac{{\rm grad}_{g_1}(h)(h)}{h^2}g_2(U,Q)V\nonumber\\
&+\frac{1}{m-n-1}[U\cdot g_2(V,Q)(h\triangle_{g_1}^L(h)+(q-m_2-1){\rm grad}_{g_1}(h)(h))]\nonumber\\
&-\frac{1}{m-n-1}[(-1)^{|V||Q|} g_2(U,Q)(h\triangle_{g_1}^L(h)+(q-m_2-1){\rm grad}_{g_1}(h)(h))V].
\end{align}
\end{thm}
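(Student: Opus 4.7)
The plan is to prove all six identities by direct substitution into the defining formula
\[ K^{L,\mu}(X,Y)T = R^{L,\mu}(X,Y)T - \frac{1}{m-n-1}\bigl[X\cdot \mathrm{Ric}^{L,\mu}(Y,T) - (-1)^{|Y||T|}\mathrm{Ric}^{L,\mu}(X,T)Y\bigr], \]
using the curvature formulas of Proposition \ref{prop2} and the Ricci formulas of Proposition \ref{prop12}. For each of the six cases I would first identify which of the four Ricci pieces (pure $M_1$, pure $M_2$, or the two mixed types) and which of the six curvature pieces appear, plug them in, and then gather terms with matching tensorial structure. No new geometric input should be needed beyond what is already proved.

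For case (1) I would substitute $R^{L,\mu}(X,Y)Z = R^{L,M_1}(X,Y)Z$ together with $\mathrm{Ric}^{L,\mu}(Y,Z) = \mathrm{Ric}^{L,M_1}(Y,Z) - \frac{q-m_2}{h}H^h_{M_1}(Y,Z)$ and likewise for $\mathrm{Ric}^{L,\mu}(X,Z)$; the $H^h_{M_1}$ contributions give the last line of the claim directly, and the $\mathrm{Ric}^{L,M_1}$ contributions combine with $K^{L,M_1}(X,Y)Z$ via the algebraic identity
\[ -\frac{1}{n_1-1} + \frac{n_2}{(m-n-1)(n_1-1)} = -\frac{1}{m-n-1}, \]
which holds since $m-n-1 = n_1+n_2-1$. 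Case (6) is the mirror image: I use $R^{L,\mu}(V,W)U$ from Proposition \ref{prop2}(6), $\mathrm{Ric}^{L,\mu}$ of two $M_2$-vectors from Proposition \ref{prop12}(4), and the analogous identity with the roles of $n_1$ and $n_2$ swapped. Cases (2) and (3) use $R^{L,\mu}(X,Y)Q = 0$ and $R^{L,\mu}(U,V)X$ respectively, together with the mixed Ricci formulas of Proposition \ref{prop12}(2)–(3). Cases (4) and (5) are the remaining mixed curvature types and require combining Proposition \ref{prop2}(5) with both the pure $M_1$ Ricci and the mixed Ricci.

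The main obstacle is purely bookkeeping, not geometric: keeping track of the $\mathbb{Z}_2$-grading sign factors $(-1)^{|X||Y|}$ that appear both in the definition of $K$ and inside the curvature/Ricci formulas. In particular, for cases (4) and (5) the sign $(-1)^{|Y||T|}$ in front of $\mathrm{Ric}^{L,\mu}(X,T)Y$ interacts non-trivially with the signs already present in Proposition \ref{prop2}(5), and one must verify that the ${\rm grad}_{g_2}\frac{X(h)}{h}$-type terms and the $\nabla^{L,M_1}_X({\rm grad}_{g_1}h)$-type terms recombine with the $\Delta^L_{g_1}(h)$ term from $\mathrm{Ric}^{L,\mu}(\partial_{y^L},\partial_{y^J})$ into the compact form stated, using the relation $g_1(\nabla^{L,M_1}_X({\rm grad}_{g_1}h),Y) = H^h_{M_1}(X,Y)$ already recorded in the proof of Proposition \ref{prop2}. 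Once the signs are aligned, each assertion reduces to an algebraic identity in the coefficients $\tfrac{1}{m-n-1}$, $\tfrac{1}{n_i-1}$, $q-m_2$, and $h$, which can be checked termwise.
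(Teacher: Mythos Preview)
Your proposal is correct and follows essentially the same route as the paper: in each of the six cases the paper also simply substitutes the curvature formulas of Proposition~\ref{prop2} and the Ricci formulas of Proposition~\ref{prop12} into the definition of $K$ (Definition~\ref{def1p}) and collects terms, using exactly the algebraic identity you note to split off $K^{L,M_i}$ in cases (1) and (6). One small correction: for case (4) the relevant curvature input is $R^{L,\mu}(X,V)Y$, which comes from Proposition~\ref{prop2}(2) via the antisymmetry $R^{L,\mu}(X,V)Y=-(-1)^{|X||V|}R^{L,\mu}(V,X)Y$, not from part (5).
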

\begin{proof}
(1)By Definition \ref{def1p} and Propsition \ref{prop12}, we have
\begin{align}\label{m1}
K^{L,\mu}(X,Y)Z&=R^{L,\mu}(X,Y)Z-\frac{1}{(m-n-1)}[X\cdot Ric^{L,\mu}(Y,Z)-(-1)^{|Y||Z|}Ric^{L,\mu}(X,Z)Y]\nonumber\\
&=R^{L,M_1}(X,Y)Z-\frac{1}{(m-n-1)}X\cdot[Ric^{L,M_1}(Y,Z)-\frac{q-m_2}{h}H_{M_1}^h(Y,Z)]\nonumber\\
&+\frac{1}{(m-n-1)}(-1)^{|Y||Z|}[Ric^{L,M_1}(X,Z)-\frac{q-m_2}{h}H_{M_1}^h(X,Z)]Y\nonumber\\
&=K^{L,M_1}(X,Y)Z+\frac{n_2}
{(m-n-1)(n_1-1)}[X\cdot Ric^{L,M_1}(Y,Z)-(-1)^{|Y||Z|}Ric^{L,M_1}(X,Z)Y]\nonumber\\
&+\frac{q-m_2}{(m-n-1)h}[X\cdot H_{M_1}^h(Y,Z)-(-1)^{|Y||Z|}H_{M_1}^h(X,Z)Y],
\end{align}
then (1) holds.\\
(2)By Definition \ref{def1p} and Propsition \ref{prop12}, we have
\begin{align}\label{m2}
K^{L,\mu}(X,Y)Q&=R^{L,\mu}(X,Y)Q-\frac{1}{(m-n-1)}[X\cdot Ric^{L,\mu}(Y,Q)-(-1)^{|Y||Q|}Ric^{L,\mu}(X,Q)Y]\nonumber\\
&=-\frac{1}{(m-n-1)}\left[X\cdot-\frac{q-m_2-1}{2}Q\left(\frac{Y(h)}{h}\right)+(-1)^{|Y||Q|}\frac{q-m_2-1}{2}Q\left(\frac{X(h)}{h}\right)Y\right]\nonumber\\
&=-\frac{q-m_2-1}{2(m-n-1)}\left[(-1)^{|Y||Z|}Q\left(\frac{X(h)}{h}\right)Y-X\cdot Q\left(\frac{Y(h)}{h}\right)\right],
\end{align}
so (2) holds.\\
(3)By Definition \ref{def1p} and Propsition \ref{prop12}, we have
\begin{align}\label{m3}
K^{L,\mu}(U,V)X&=R^{L,\mu}(U,V)X-\frac{1}{(m-n-1)}[U\cdot Ric^{L,\mu}(V,X)-(-1)^{|V||X|}Ric^{L,\mu}(U,X)V]\nonumber\\
&-\frac{1}{(m-n-1)}\left[U\cdot -(q-m_2-1)X\left(\frac{V(h)}{h}\right)+(-1)^{(|V||X|}(q-m_2-1)X\left(\frac{U(h)}{h}\right)V\right]\nonumber\\
&=(-1)^{|V||X|}U\left(\frac{X(h)}{h}\right)V+(-1)^{|U|(|V|+|X|)}V\left(\frac{X(h)}{h}\right)U\nonumber\\
&-\frac{q-m_2-1}{2(m-n-1)h}\left[U\cdot X\left(\frac{V(h)}{h}\right)-(-1)^{(|V||X|}X\left(\frac{U(h)}{h}\right)V\right],
\end{align}
so (3) holds.\\
(4)By Definition \ref{def1p} and Propsition \ref{prop12}, we have
\begin{align}\label{m4}
K^{L,\mu}(X,V)Y&=R^{L,\mu}(X,V)Y-\frac{1}{(m-n-1)}[X\cdot Ric^{L,\mu}(V,Y)-(-1)^{|V||Y|}Ric^{L,\mu}(X,Y)V]\nonumber\\
&=(-1)^{|X||V|}R^{L,\mu}(V,X)Y-\frac{1}{m-n-1}[X\cdot -(q-m_2-1)Y\left(\frac{V(h)}{h}\right)\nonumber\\
&-(-1)^{(|V||Y|}(Ric^{L,M_1}(X,Y)-\frac{q-m_2}{h}H_{M_1}^h(X,Y))V]\nonumber\\
&=\frac{1}{m-n-1}(-1)^{|V||Y|}[(n-q+m_2-1)H_{M_1}^h(X,Y)+Ric^{L,M_1}(X,Y)]V\nonumber\\
&+\frac{q-m_2-1}{m-n-1}X\cdot Y\left(\frac{V(h)}{h}\right),
\end{align}
so (4) holds.\\
(5)By Definition \ref{def1p} and Propsition \ref{prop12}, we have
\begin{align}\label{m5}
K^{L,\mu}(X,U)V&=R^{L,\mu}(X,U)V-\frac{1}{m-n-1}[X\cdot Ric^{L,\mu}(U,V)-(-1)^{|U||V|}Ric^{L,\mu}(X,V)U]\nonumber\\
&=(-1)^{|X|(|U|+|V|)}V\left(\frac{X(h)}{h}\right)U-(-1)^{|X|(|U|+|V|)+|V||U|}g_2(V,U){\rm grad}_{g_2}\frac{X(h)}{h}\nonumber\\
&-(-1)^{|X|(|U|+|V|)}hg_2(U,V)\nabla^{L,M_1}_X({\rm grad}_{g_2}h)-\frac{1}{m-n-1}\bigg\{X\cdot\bigg[Ric^{L,M_2}(U,V)\nonumber\\
&-g_\mu(U,V)\bigg(\frac{\triangle^L_{g_1}(h)}{h}+(q-m_2-1)\frac{({\rm grad}_{g_1}h)(h)}{h^2}\bigg)\bigg]+(-1)^{|U||V|}\frac{(q-m_2-1)}{2}V\left(\frac{X(h)}{h}\right)U\bigg\}\nonumber\\
&=(-1)^{|X|(|U|+|V|)}V\left(\frac{X(h)}{h}\right)U-(-1)^{|X|(|U|+|V|)+|V||U|}g_2(V,U){\rm grad}_{g_2}\frac{X(h)}{h}\nonumber\\
&-(-1)^{|X|(|U|+|V|)}hg_2(U,V)\nabla^{L,M_1}_X({\rm grad}_{g_2}h)-\frac{1}{m-n-1}X\cdot Ric^{L,M_2}(U,V)\nonumber\\
&+\frac{1}{m-n-1}X\cdot g_2(U,V)[h\triangle^L_{g_1}(h)+(-1)^{|U||V|}\frac{(q-m_2-1)}{2}V\left(\frac{X(h)}{h}\right)U,
\end{align}
so (5) holds.\\
(6)By Definition \ref{def1p} and Propsition \ref{prop12}, we have
\begin{align}\label{m6}
K^{L,\mu}(U,V)Q&=R^{L,\mu}(U,V)Q-\frac{1}{m-n-1}[U\cdot Ric^{L,\mu}(V,Q)-(-1)^{|V||Q|}Ric^{L,\mu}(U,Q)V]\nonumber\\
&=R^{L,M_2}(U,V)Q+(-1)^{|Q||V|)}g_\mu(U,Q){\rm grad}_{g_2}\frac{V(h)}{h}\nonumber\\
&-(-1)^{|U|(|Q|+|V|)}g_\mu(V,Q){\rm grad}_{g_2}\frac{U(h)}{h}-(-1)^{|U|(|Q|+|V|)}\frac{({\rm grad}_{g_1}h)(h)}{h^2}g_2(V,Q)U\nonumber\\
&+(-1)^{|V||Q|}\frac{({\rm grad}_{g_1}h)(h)}{h^2}g_2(U,Q)V\nonumber\\
&-\frac{1}{m-n-1}\bigg\{U\cdot \bigg[Ric^{L,M_2}(V,Q)-g_\mu(V,Q)\bigg(\frac{\triangle^L_{g_1}(h)}{h}+(q-m_2-1)\frac{({\rm grad}_{g_1}h)(h)}{h^2}\bigg)\bigg]\nonumber\\
&-(-1)^{|V||Q|}\bigg[Ric^{L,M_2}(U,Q)-g_\mu(U,Q)\bigg(\frac{\triangle^L_{g_1}(h)}{h}+(q-m_2-1)\frac{({\rm grad}_{g_1}h)(h)}{h^2}\bigg)\bigg]V\bigg\}\nonumber\\
&=K^{L,M_2}(U,V)Q+\frac{n_1}
{(m-n-1)(n_2-1)}[U\cdot Ric^{L,M_2}(V,Q)-(-1)^{|V||Q|}Ric^{L,M_2}(U,Q)V]\nonumber\\
&+(-1)^{|Q||V|)}g_\mu(U,Q){\rm grad}_{g_2}\frac{V(h)}{h}-(-1)^{|U|(|Q|+|V|)}g_\mu(V,Q){\rm grad}_{g_2}\frac{U(h)}{h}\nonumber\\
&-(-1)^{|U|(|Q|+|V|)}\frac{({\rm grad}_{g_1}h)(h)}{h^2}g_2(V,Q)U+(-1)^{|V||Q|}\frac{({\rm grad}_{g_1}h)(h)}{h^2}g_2(U,Q)V\nonumber\\
&+\frac{1}{m-n-1}U\cdot \bigg[g_2(V,Q)(h\triangle^L_{g_1}(h))+(q-m_2-1)({\rm grad}_{g_1}h)(h)\bigg]\nonumber\\
&-\frac{1}{m-n-1}\bigg[(-1)^{|V||Q|}g_2(U,Q)(h\triangle^L_{g_1}(h))+(q-m_2-1)({\rm grad}_{g_1}h)(h)\bigg]V,
\end{align}
so (6) holds.\\
\end{proof}
\section{Mixed Ricci flat super twisted products}
\label{Section:4}
\begin{defn}\label{defF1}Let $M=M_1\times_\mu M_2$ be a super twisted product of $(M_1,g_1)$ and $(M_2,g_2)$ with twisting
function $h$, then $M=M_1\times_\mu M_2$ is called mixed Ricci-flat if $Ric(X,V)=0$ for all
$X\in  Vect(M_1)$ and $V\in  Vect(M_2)$.
\end{defn}
\begin{thm}\label{thm:5}
Let $M=M_1\times_\mu M_2$ be a super twisted product of $(M_1,g_1)$ and $(M_2,g_2)$ with twisting
function $h$ and $q-m_2-1\neq0$. Then, $Ric(X,V)=0$ for all
$X\in  Vect(M_1)$ and $V\in  Vect(M_2)$
if and only if $M=M_1\times_\mu M_2$ can be expressed as a super warped product, $M=M_1\times_\mu M_2$ of $(M_1,g_1)$ and
$(M_2,\widehat{g_2})$ with a warping function $\widehat{\Phi}$, where $\widehat{g_2}$ is a conformal metric tensor to $g_2$.
\end{thm}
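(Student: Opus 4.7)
The plan is to reduce the mixed Ricci-flat condition to a purely analytic statement about the twisting function $h$, and then to factor $h$ into a product of a function on $M_1$ and a function on $M_2$. That factorisation is precisely what turns a super twisted product into a super warped product whose fibre metric is conformal to $g_2$.

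First I would invoke Proposition \ref{prop12}(2), which gives
\[
{\rm Ric}^{L,\mu}(\partial_{x^L},\partial_{y^J})
=-(q-m_2-1)(-1)^{|\partial_{x^L}||\partial_{y^J}|}\,\partial_{y^J}\!\left(\frac{\partial_{x^L}(h)}{h}\right).
\]
By $\mathcal{O}_M$-linearity of the Ricci tensor it suffices to test on the coordinate frames, so the hypothesis ${\rm Ric}(X,V)=0$ for all $X\in{\rm Vect}(M_1)$, $V\in{\rm Vect}(M_2)$ is equivalent, thanks to $q-m_2-1\neq 0$, to the system
\[
\partial_{y^J}\partial_{x^L}(\ln h)=0 \qquad\text{for all }L,J.
\]

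Next I would integrate this system on the $\mathbb{Z}_2$-graded domain. For each $J$, set $G_J:=\partial_{y^J}(\ln h)$; the condition says $\partial_{x^L}G_J=0$ for every $L$, so each $G_J$ is a pullback from $M_2$, say $G_J=\pi_2^*\psi_J$. The family $\{\psi_J\}$ is compatible under the $\mathbb{Z}_2$-commutation of coordinate derivations applied to $\ln h$, so locally there exists $\psi\in\mathcal{O}_{M_2}$ with $\partial_{y^J}\psi=\psi_J$ for all $J$. Then $\ln h-\pi_2^*\psi$ is annihilated by every $\partial_{y^J}$, hence is $\pi_1^*\phi$ for some $\phi\in\mathcal{O}_{M_1}$. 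Since the body of $h$ is positive, $\phi_1:=e^{\phi}$ and $\psi_1:=e^{\psi}$ are well defined, and
\[
h(x,y)=\phi_1(x)\,\psi_1(y),\qquad
g_\mu=g_1\oplus h^2g_2=g_1\oplus \phi_1(x)^2\bigl(\psi_1(y)^2g_2\bigr)=g_1\oplus\widehat{\Phi}(x)\,\widehat{g_2},
\]
where $\widehat{\Phi}:=\phi_1^2$ depends only on $M_1$ and $\widehat{g_2}:=\psi_1^2g_2$ is conformal to $g_2$. This exhibits $M$ as the claimed super warped product.

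The converse is immediate: if $h=\phi_1(x)\psi_1(y)$ then $\frac{\partial_{x^L}(h)}{h}=\frac{\partial_{x^L}\phi_1}{\phi_1}$ lives in $\pi_1^*\mathcal{O}_{M_1}$, so $\partial_{y^J}$ annihilates it and Proposition \ref{prop12}(2) gives ${\rm Ric}^{L,\mu}(X,V)=0$. The main obstacle is the integration step on the supermanifold: one needs both that a section of $\mathcal{O}_M$ killed by every $\partial_{y^J}$ is a pullback of a section from $M_1$, and that the compatible system $\partial_{y^J}\psi=\psi_J$ is locally solvable. Expanding in the Grassmann generators of the structure sheaves reduces both statements to the classical smooth Poincar\'e lemma on the bodies $|M_1|$ and $|M_2|$, so no new super-analytic machinery beyond what is already used in the earlier sections of the paper is required.
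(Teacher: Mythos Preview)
Your proposal is correct and follows essentially the same route as the paper: reduce the mixed Ricci-flat condition via Proposition \ref{prop12}(2) to $\partial_{y^J}\partial_{x^L}(\ln h)=0$, integrate to get $\ln h=\phi(x)+\psi(y)$, and absorb the $M_2$-factor into a conformal change of $g_2$. The only difference is that you are more explicit about the integration step on the supermanifold (invoking compatibility of the $\psi_J$ and a Poincar\'e-type argument), whereas the paper simply asserts that $\partial_{y^J}\partial_{x^L}(\ln h)=0$ forces $\ln h$ to split as a sum; both the forward and converse directions match.
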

\begin{proof}
First, we prove the sufficiency of the theorem. By Propsiton \ref{prop12} and $\frac{X(h)}{h}=X(lnh)$, we know
\begin{align}\label{kkk}
{\rm Ric}^{L,\mu}(X,V)&=-(q-m_2-1)(-1)^{|X||V|}VX(lnh)=0,
\end{align}
then by $q-m_2-1\neq0$, $VX(lnh)=0$ and $XV(lnh)=0$,  $XV(lnh)=0$ implies that $V(lnh)$ only depends
on the points of $M_2$, and similarly, $VX(lnh)=0$ implies that $X(lnh)$ only depends
on the points of $M_1$. Thus $h$ can be expressed as a sum of two functions $\Phi$ and $\Psi$
which are defined on $M_1$ and $M_2$, respectively, that is, $lnh(s,t)=\phi(s)+\psi(t)$ for
any $(s,t)\in M_1\times M_2$. Hence $h=e^\phi e^\psi$, that is, $h=\Phi(s)\Psi(t)$,
where $\Phi=e^\phi$ and $\Psi=e^\psi$ for any $(s,t)\in M_1\times M_2$. Thus we can write
$g=g_1\oplus \Phi^2\widehat{g_2}$, where $\widehat{g_2}= \Psi^2g_2$ , that is, a super twisted product $M_1\times_\mu M_2$ can be
expressed as a super warped product $M_1\times_\mu M_2$, where the metric tensor of $M_2$ is $\widehat{g_2}$ given
above.\\
By Proposition \ref{prop12}, we find that it's obvious about the necessity.
\end{proof}
\begin{thm}\label{thm:6}
Let $M=M_1\times_\mu M_2$ be a super twisted product of $(M_1,g_1)$ and $(M_2,g_2)$ with twisting
function $h$. If $M$ is a $W_2$-curvature flat super twisted product,
then $M=M_1\times_\mu M_2$ can be expressed as a super warped product.
\end{thm}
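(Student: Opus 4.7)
The plan is to reduce $W_2$-curvature flatness to the mixed Ricci-flat condition, so that Theorem \ref{thm:5} applies directly. First, by Definition \ref{def1p}, the $W_2$-tensor is obtained by lowering an index of the auxiliary tensor $K^{L,\mu}$ using the non-degenerate metric $g_\mu$, so the hypothesis $W_2 \equiv 0$ is equivalent to $K^{L,\mu} \equiv 0$.

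Next, assuming the super-dimension condition $q - m_2 - 1 \neq 0$ (inherited from Theorem \ref{thm:5}), I would invoke formula (2) of Theorem \ref{thm:9}. Setting $K^{L,\mu}(X,Y)Q = 0$ for all homogeneous $X,Y \in {\rm Vect}(M_1)$ and $Q \in {\rm Vect}(M_2)$ yields
\begin{equation*}
(-1)^{|Y||Q|} Q\!\left(\frac{X(h)}{h}\right) Y \;=\; Q\!\left(\frac{Y(h)}{h}\right) X.
\end{equation*}
The right-hand side is an $\mathcal{O}_M$-multiple of $X$, while the left-hand side is an $\mathcal{O}_M$-multiple of $Y$; choosing $X = \partial_{x^I}$ and $Y = \partial_{x^J}$ for distinct entries $I \neq J$ of the natural frame on $M_1$, linear independence forces both scalar coefficients to vanish. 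Running over all such index pairs produces $Q(\partial_{x^I}(h)/h) = 0$ for every $I$ and every $Q \in {\rm Vect}(M_2)$.

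These scalar vanishings feed directly into Proposition \ref{prop12}(2), giving ${\rm Ric}^{L,\mu}(\partial_{x^L}, \partial_{y^J}) = 0$, and by $\mathcal{O}_M$-bilinearity of Ricci this extends to ${\rm Ric}^{L,\mu}(X, V) = 0$ for all $X \in {\rm Vect}(M_1)$ and $V \in {\rm Vect}(M_2)$. Hence $M$ is mixed Ricci-flat and Theorem \ref{thm:5} then realizes $M = M_1 \times_\mu M_2$ as a super warped product of $(M_1, g_1)$ with a conformal rescaling of $(M_2, g_2)$.

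The main obstacle is the linear-independence step carried out in the super category: one must keep track of the $\mathbb{Z}_2$-parities as $I, J$ range over both even and odd frame indices, and implicitly assume that ${\rm Vect}(M_1)$ admits at least two independent basis directions (automatic except for the degenerate super-dimension $(1,0)$ case). A secondary, but necessary, point is that the hypothesis $q - m_2 - 1 \neq 0$ should be appended to the statement of the theorem, since both the key cancellation in formula (2) of Theorem \ref{thm:9} and the final appeal to Theorem \ref{thm:5} require it.
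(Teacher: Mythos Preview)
Your argument for the case $q-m_2-1\neq 0$ is essentially the paper's own: both set $K^{L,\mu}(X,Y)Q=0$ with $X=\partial_{x^I}$, $Y=\partial_{x^J}$, $I\neq J$, and use linear independence to force $\partial_{y^K}\partial_{x^I}(\ln h)=0$, whence $\ln h$ splits as $\phi(x)+\psi(y)$. Routing this through Proposition~\ref{prop12}(2) and Theorem~\ref{thm:5}, as you do, is a harmless repackaging of the same computation.

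The genuine gap is your treatment of $q-m_2-1=0$. You propose to add it as a hypothesis, but the theorem as stated carries no such restriction, and the paper's proof does cover this case. When $q-m_2-1=0$, formula~(2) of Theorem~\ref{thm:9} is vacuous and the mixed Ricci tensor in Proposition~\ref{prop12}(2) vanishes identically (so ``mixed Ricci-flat'' is automatic yet says nothing about $h$, and Theorem~\ref{thm:5} is unavailable). The paper instead turns to formula~(3) of Theorem~\ref{thm:9}: with $q-m_2-1=0$ the extra term drops out and one is left with
\[
K^{L,\mu}(U,V)X=(-1)^{|V||X|}U\bigl(X(\ln h)\bigr)V+(-1)^{|U|(|V|+|X|)}V\bigl(X(\ln h)\bigr)U=0.
\]
Taking $U=\partial_{y^P}$, $V=\partial_{y^Q}$ with $P\neq Q$ and using linear independence again yields $\partial_{y^P}\partial_{x^I}(\ln h)=0$, and the splitting of $\ln h$ follows exactly as before. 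So rather than weakening the statement, you should supply this second case; the mechanism (a different component of $K$) is the missing idea.
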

\begin{proof}
By Theorem \ref{thm:9}, we know
\begin{align}\label{nbn}
K^{L,\mu}(X,Y)Q&=-\frac{q-m_2-1}{m-n-1}\left[(-1)^{|Y||Q|}Q\frac{X(h)}{h}Y-X\cdot Q\frac{Y(h)}{h}\right]=0.
\end{align}
If $q-m_2-1\neq0,$ let $Q=\partial_{y^K},~X=\partial_{x^I},~Y=\partial_{x^J},$ when $I\neq J,$ we have\\
\begin{align}\label{kjk}
(-1)^{|\partial_{x^J}||\partial_{y^K}|}\partial_{y^K}[\partial_{x^I}(lnh)]\partial_{x^J}-\partial_{x^I}\cdot\partial_{y^K}[\partial_{x^J}(lnh)]=0,
\end{align}
Because a pair $(I,J)$ is arbitrary, then $I\neq J$ implies that $\partial_{y^K}[\partial_{x^I}(lnh)]=0$.
So
\begin{align}\label{opop}
lnh=\phi(x)+\psi(y),
\end{align}
for any $(x,y)\in M_1\times M_2$, then $h=e^{\phi(x)}e^{\psi(y)}.$\\
If $q-m_2-1=0$, then by Theorem \ref{thm:9}, we know
\begin{align}
K^{L,\mu}(U,V)X=(-1)^{|V||X|}UX(lnh)V+(-1)^{|U|(|V|+|X|)}VX(lnh)U=0,
\end{align}
similarly, let $U=\partial_{y^P},~V=\partial_{y^Q},~X=\partial_{x^I},$ when $P\neq Q,$ we have
\begin{align}\label{kjk}
(-1)^{|\partial_{x^I}||\partial_{y^Q}|}\partial_{y^P}[\partial_{x^I}(lnh)]\partial_{y^Q}+(-1)^{|\partial_{y^P}|(|\partial_{y^Q}|+|\partial_{x^I}|)}\partial_{y^Q}[\partial_{x^I}(lnh)]\partial_{y^P}=0.
\end{align}
Similar to (\ref{opop}), we can get
\begin{align}\label{oporp}
lnh=\omega(x)+\nu(y),
\end{align}
for any $(x,y)\in M_1\times M_2$, then $h=e^{\omega(x)}e^{\nu(y)},$ therefore we can get Theorem \ref{thm:6}.
\end{proof}
\begin{cor}\label{zxz}
Let $M=M_1\times_\mu M_2$ be a super twisted product of $(M_1,g_1)$ and $(M_2,g_2)$ with twisting
function $h$. Then, $M$ is a super warped product if and only if $K^{L,\mu}(X,Y)Q=0$ for $X,Y \in Vect(M_1), Q\in Vect(M_2)$.
\end{cor}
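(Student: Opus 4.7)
The plan is to exploit the explicit formula from Theorem \ref{thm:9}(2) as the central tool, since $K^{L,\mu}(X,Y)Q$ already has a clean closed form in terms of the twisting function $h$. Both directions of the equivalence should fall out of this one identity, once one translates the geometric condition ``$M$ is a super warped product'' into the analytic condition that $h$ has a product decomposition $h(s,t)=\Phi(s)\Psi(t)$, or equivalently that $\ln h$ is a sum $\phi(s)+\psi(t)$.

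For the necessity (super warped product $\Rightarrow K^{L,\mu}(X,Y)Q=0$), I would argue as follows. By definition, a super warped product has a warping function depending only on points of $M_1$; after absorbing the $M_2$-factor by a conformal rescaling of $g_2$, we may assume $h=\Phi(s)$, i.e.\ $Q(h)=0$ for every $Q\in\mathrm{Vect}(M_2)$. Then $X(h)/h=X(\ln\Phi)$ is a function pulled back from $M_1$, so $Q\bigl(X(h)/h\bigr)=0$; symmetrically $Q\bigl(Y(h)/h\bigr)=0$. Plugging these vanishings into Theorem \ref{thm:9}(2) gives $K^{L,\mu}(X,Y)Q=0$ immediately.

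For the sufficiency ($K^{L,\mu}(X,Y)Q=0 \Rightarrow$ super warped product), I would reuse the argument of Theorem \ref{thm:5} and Theorem \ref{thm:6}. Assume first $q-m_2-1\ne 0$; then Theorem \ref{thm:9}(2) forces
\begin{equation*}
(-1)^{|Y||Q|}Q\bigl(X(\ln h)\bigr)\,Y \;=\; X\cdot Q\bigl(Y(\ln h)\bigr).
\end{equation*}
Evaluating on natural frame fields $X=\partial_{x^I}$, $Y=\partial_{x^J}$, $Q=\partial_{y^K}$ with $I\ne J$ and comparing coefficients of $\partial_{x^J}$, one obtains $\partial_{y^K}\bigl(\partial_{x^I}(\ln h)\bigr)=0$ for all $I,K$. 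This separates variables: $\ln h(s,t)=\phi(s)+\psi(t)$, hence $h=\Phi(s)\Psi(t)$. Writing $g_\mu=g_1\oplus\Phi^2\widehat{g_2}$ with $\widehat{g_2}=\Psi^2 g_2$ exhibits $M$ as a super warped product, exactly as in the proof of Theorem \ref{thm:5}.

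The main obstacle is the degenerate case $q-m_2-1=0$. Here formula (2) of Theorem \ref{thm:9} vanishes identically, so the hypothesis $K^{L,\mu}(X,Y)Q=0$ is automatic and gives no information. I would handle this exactly as in Theorem \ref{thm:6}: invoke instead Theorem \ref{thm:9}(3), which in this dimension specializes to $(-1)^{|V||X|}UX(\ln h)V+(-1)^{|U|(|V|+|X|)}VX(\ln h)U$, and test against $U=\partial_{y^P}$, $V=\partial_{y^Q}$ with $P\ne Q$ to again conclude $\partial_{y^P}\bigl(\partial_{x^I}(\ln h)\bigr)=0$, giving the same separation $\ln h=\omega(s)+\nu(t)$. (Strictly speaking, the corollary as stated should be read with the tacit understanding that in the degenerate dimension one uses the companion vanishing supplied by Theorem \ref{thm:6}.) Once the multiplicative decomposition of $h$ is in hand, the warped-product conclusion follows verbatim from the last lines of the proof of Theorem \ref{thm:5}.
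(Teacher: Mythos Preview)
Your approach is essentially the one the paper intends: the corollary is left without explicit proof precisely because both directions are immediate from formula~(2) of Theorem~\ref{thm:9}, exactly as you describe. The necessity direction and the sufficiency direction in the case $q-m_2-1\neq 0$ are handled correctly and mirror the first half of the proof of Theorem~\ref{thm:6}.

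The genuine gap is your treatment of the case $q-m_2-1=0$. You correctly observe that the hypothesis $K^{L,\mu}(X,Y)Q=0$ becomes vacuous there, but your proposed fix---invoking the vanishing of $K^{L,\mu}(U,V)X$ from Theorem~\ref{thm:9}(3)---is not admissible: that quantity is \emph{not} assumed to vanish in the corollary. Theorem~\ref{thm:6} could use it because its hypothesis was full $W_2$-flatness; the corollary only assumes the single block $K^{L,\mu}(X,Y)Q=0$. In fact, as stated the equivalence fails when $q-m_2-1=0$: any twisting function $h$ that does not separate as $\Phi(s)\Psi(t)$ still satisfies $K^{L,\mu}(X,Y)Q\equiv 0$ by Theorem~\ref{thm:9}(2), yet $M$ is not a super warped product. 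The correct resolution is not to borrow an extra vanishing from outside the hypothesis but to restrict the corollary to $q-m_2-1\neq 0$, parallel to Theorem~\ref{thm:5}. Your parenthetical remark shows you sensed this; you should state it as a necessary dimensional restriction rather than patch the argument with an assumption the statement does not contain.
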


\section*{Acknowledgements}
The work was supported in part by  NSFC (No.11771070). The authors thank the referee for his (or her) careful reading and helpful comments.

\section*{}

\end{document}